\definecolor{gray75}{gray}{0.75}
\newcommand{\sln}{\linespread{1}}
\newcommand*{\email}[1]{\href{mailto:#1}{\nolinkurl{#1}} } 
\titleformat{\chapter}[block]{\LARGE\bfseries\sln}{Chapter \thechapter}{11pt}{\newline\huge\bfseries}
\newtheorem{theorem}{Theorem}[section]
\newtheorem{remark}{Remark}[section]
\newtheorem{definition}{Definition}[section]
\newenvironment{proof}{\paragraph{Proof:}}{\hfill$\square$}
\newtheorem{proposition}{Proposition}[section]
\begin{document}
\title{ Minimal surfaces in three-dimensional Matsumoto space}
\author{Ranadip Gangopadhyay$^*$ and Bankteshwar Tiwari$^*$\\
$^*$DST-CIMS, Institute of Science, Banaras Hindu University, Varanasi-221005, India}
\maketitle

\begin{abstract}
In this paper we consider the Matsumoto metric $F=\frac{\alpha^2}{\alpha-\beta}$, on the three dimensional real vector space and obtain the partial differential equations that characterize the minimal surfaces which are graphs of smooth functions and then we prove that plane is the only such surface. We also obtain the partial differential equation that characterizes the minimal translation surfaces and show that again plane is the only such surface.
\end{abstract}

\section{Introduction}
A minimal surface is a surface that locally minimizes its area. The theory of minimal surfaces arises due to the work of Lagrange in $1762$ when he was trying to find the surface $f=f(x,y)$ having the least area but bounded by a given  closed curve. He could not succeed in finding any solution other than the plane. In $1776$, Meusnier discovered that the helicoid and catenoid are also the solutions. He also showed that this is equivalent to the vanishing of the
mean curvature, and the study of the differential geometry of these surfaces was started.

The study of minimal surfaces in  Riemannian manifolds has been extensively developed \cite{RS}. Many of the developed techniques  have played key roles in geometry and partial differential equations. Examples include monotonicity and tangent cone analysis originating in the regularity theory 
for minimal surfaces, estimates for nonlinear equations based on the maximum principle arising in Bernstein's classical work, and even Lebesgue's 
definition of the integral that he developed in his thesis on the Plateau problem for minimal surfaces \cite{Ra1}.  However, minimal surfaces in Finsler spaces have not been studied and developped at the same pace. The fundamental contribution to the minimal surfaces of Finsler geometry was given by Shen \cite{ZS}. He introduced the notion of mean curvature for immersions into Finsler manifolds and he established some of its properties. As in the Riemannian case, if the mean curvature is identically zero, then the immersion is said to be minimal.
\par The Randers metric is the simplest class of non-Riemannian Finsler metric which is defined  as $F=\alpha+\beta$, where $\alpha$ is a Riemannian metric and $\beta$ is a one-form. M. Souza and K. Tenenblat studied the rotational surfaces to become a minimal surfaces in Minkowski space with Randers metric \cite{MK} and Souza et. al obtained a Bernstein type theorem on a Randers space \cite{MJK}. After that few other authors studied the minimal surfaces on Randers spaces \cite{NC1, NC2, NC3, RK}. V. Balan studied the rotational surface and graph of a smooth function to become a minimal surfaces in Minkowski space with Kropina metric \cite{VB}. N. Cui and Y.B. Shen studied a special class of $(\alpha, \beta)$- metric which satisfies the system of differential equation \cite{NC4}
\begin{equation}
(\phi-s\phi')^{n-1}=1+p(s)+s^2q(s)
\end{equation}
\begin{equation}
(\phi-s\phi')^{n-2}\phi''=q(s)
\end{equation}
where, $p(s)$ and $q(s)$ are arbitrary odd smooth functions. But again Randers metric is the only metric they have found that satifies the above differential equation.\\
Matsumoto slope metric is another class of interesting $(\alpha,\beta)$ metric investigated by M. Matsumoto on the motivation of a letter written by  P. Finsler himself in 1969 to Matsumoto. He considered the following problem:
A person is walking on a horizontal plane with some velocity, and the gravity is acting perpendicularly on this plane. Now suppose the person walks with same velocity on an inclined plane to the horizontal sea level. Now the question is under the presence of gravitational forces, what should be the trajectory the person should walk in the center to reach a given destination in the shortest time?
Based on this, he has formulated the Slope principle \cite{MM1,MM2}. Matsumoto showed that for a hiker walking the slope of a mountain under the presence of gravity, the most efficient time minimizing paths are not the Riemannian geodesics, but the geodesics of the slope metric $F =\frac{\alpha^2}{\alpha-\beta} $.
\par In this paper, we study the minimal surface of graph of a smooth function and translation surface in Minkowski Matsumoto slope metric and prove that plane is the only minimal surface in both the cases.

\section{Preliminaries}
Let $ M $ be an $n$-dimensional smooth manifold. $T_{x}M$ denotes the tangent space of $M$
 at $x$. The tangent bundle of $ M $ is the disjoint union of tangent spaces $ TM:= \sqcup _{x \in M}T_xM $. We denote the elements of $TM$ by $(x,y)$ where $y\in T_{x}M $ and $TM_0:=TM \setminus\left\lbrace 0\right\rbrace $. \\
 \begin{definition}
 \cite{SSZ} A Finsler metric on $M$ is a function $F:TM \to [0,\infty)$ satisfying the following condition:
 \\(i) $F$ is smooth on $TM_{0}$, 
 \\(ii) $F$ is a positively 1-homogeneous on the fibers of tangent bundle $TM$,
 \\(iii) The Hessian of $\frac{F^2}{2}$ with element $g_{ij}=\frac{1}{2}\frac{\partial ^2F^2}{\partial y^i \partial y^j}$ is positive definite on $TM_0$.\\
 The pair $(M,F)$ is called a Finsler space and $g_{ij}$ is called the fundamental tensor.
 \end{definition} 
 A Matsumoto metric on $M$ is a Finsler structure $F$ on $TM$ is given by $F=\frac{\alpha^2}{\alpha-\beta}$, where $\alpha=\sqrt{a_{ij}y^iy^j}$ is a Riemannian metric and  $\beta=b_iy^i$ is a one-form with $0<b<1/2$.
 
 Let $(M^n,F)$ be a $n$-dimensional Finsler manifold. Then the Busemann-Hausdorff volume form is defined as $dV_{BH}=\sigma_{BH}(x)dx$, where
    \begin{equation}\label{eqn9}
    \sigma_{BH}(x)=\frac{Vol(B^n(1))}{Vol\left\lbrace (y^i)\in T_xM : F(x,y)< 1 \right\rbrace },
    \end{equation}
$B^n$ is the Euclidean unit ball in $\mathbb{R}^n$ and $vol$ is the Euclidean volume.\\
Let $( \tilde{M}^m, \tilde{F})$ be a Finsler manifold, with local coordinates $(\tilde{x}^1, \dots, \tilde{x}^m)$ and 
$\varphi : M^n \to (\tilde{M}^m, \tilde{F})$ be an immersion. Then $\tilde{F}$ induces a Finsler metric
on $M$, defined by
\begin{equation}\label{eqn2.1}
F(x,y)=\left( \varphi^*\tilde{F}\right) (x,y)=\tilde{F}\left( \varphi(x),\varphi_*(y)\right) ,\quad \forall (x,y)\in TM.
\end{equation}
At first we assume the following convention: the greek letters $\epsilon, \eta, \gamma, \tau, \dots$ are the indices ranging from $1$ to $n$ and the latin letters $i,j,k,l,\dots$ are the indices ranging from $1$ to $n+1$.
\par  A Minkowski space is a vector space $V^n$ equipped with a Minkowski norm $F$ whose indicatrix is strongly convex. Equivalently, we can say that $F(x, y)$ depends only on $y \in T_x(V^n)$. In this paper we will consider the hypersurface $M^n$ in the Minkowski Matsumoto space $V^{n+1}$ given by the immersion $\varphi : M^n \to (V^{n+1}, F_b)$, where
 $F_b= \frac{\alpha^2}{\alpha-\beta}$, $\alpha$ is the Euclidean metric,
and $\beta$ is a one-form with norm $b$, $0 \le b < 1/2$. Without loss of generality we will
consider $\beta = b dx_{n+1}$. If $M^n$ has local coordinates $x = (x^{\epsilon}), \epsilon= 1,... , n$, and
$\varphi(x) =\left(  \varphi^i(x^{\epsilon})\right)\in V $, $i = 1,\dots, n + 1$, we define
\begin{equation}\label{eqn2.01}
\mathcal{F}(x,z)=\frac{vol (B^n)}{vol (D^n_x)}, \quad z=\left( z^i_{\epsilon}\right)=\frac{\partial \varphi^i}{\partial x^{\epsilon}},
\end{equation}
where,
\begin{equation}
D^n_x=\left\lbrace (y^1,y^2,...,y^n)\in \mathbb{R}^n:F(x,y)<1\right\rbrace.
\end{equation}
The Euclidean volume of $D^n_x$ is given by
\begin{equation}
vol D^n_x=\frac{vol B^n}{\left(1-b^2A^{\epsilon \eta}z^{n+1}_{\epsilon}z^{n+1}_\eta \right)^{\frac{n+1}{2}}\sqrt{det A} }
\end{equation}
where, 
\begin{equation}
A=\left( A_{\epsilon \eta}\right)=\left( \sum\limits_{i=1}^{n+1}z^i_{\epsilon }z^i_{\eta}\right), \quad \textnormal{and} \quad \left( A^{\epsilon \eta}\right) =\left( A_{\epsilon \eta}\right) ^{-1}
\end{equation}
Then the volume form $dV_{BH}$ is given by
\begin{equation}
dV_{BH}=\left(1-b^2A^{\epsilon \eta}z^{n+1}_{\epsilon}z^{n+1}_{\eta} \right)^{\frac{n+1}{2}}\sqrt{det A}dx^1...dx^n.
\end{equation}
 The mean curvature $\mathcal{H}_{\varphi}$, introduced by Z.Shen \cite{ZS} and is given by
 \begin{equation}
 \mathcal{H}_{\varphi}(v)=\frac{1}{\mathcal{F}}\left\lbrace \frac{\partial^2 \mathcal{F}}{\partial z^i_{\epsilon}\partial z^j_{\eta}} \frac{\partial^2 \mathcal{\varphi}^j}{\partial x^{\epsilon}\partial x^{\eta}}+\frac{\partial^2 \mathcal{F}}{\partial z^i_{\epsilon}\partial \tilde{x}^j} \frac{\partial \mathcal{\varphi}^j}{\partial x^{\epsilon}} -\frac{\partial \mathcal{F}}{\partial \tilde{x}^i}\right\rbrace v^i.
 \end{equation}
 Here $v=(v^i)$ is a vector field over $\tilde{M}$ and $\mathcal{H}_{\varphi}(v)$depends linearly on $v$. Also the mean curvature vanishes on $\varphi_*(TM)$. Whenever $(V ,F)$ is a Minkowski space, the expression of the mean curvature reduces to
 \begin{equation}
 \mathcal{H}_{\varphi}(v)=\frac{1}{\mathcal{F}}\left\lbrace \frac{\partial^2 \mathcal{F}}{\partial z^i_{\epsilon}\partial z^j_{\eta}} \frac{\partial^2 \mathcal{\varphi}}{\partial x^{\epsilon}\partial x^{\eta}}\right\rbrace v^i.
 \end{equation}
 The immersion $\varphi$ is said to be minimal when $\mathcal{H}_{\varphi}=0$.
 
\section{The partial differential equation of minimal surfaces in Matsumoto spaces}
In this section we obtain the volume form of Matsumoto metric and with the help of that for any immersion $\varphi:M^2\to (V^3,F_b)$ we obtain the characteristic differential equation for which $\varphi$ is minimal.
 \begin{proposition}\label{prop1}\cite{XZ}
 Let $F=\alpha\phi(s)$, $s=\beta/\alpha$, be an $(\alpha,\beta)$-metric on an $n$-dimensional manifold $M$. Let
 \begin{equation}\label{eqn3.1}
 f(b) := \begin{cases} \frac{\int\limits_{0}^{\pi}\sin^{n-2}(t)dt}{\int\limits_{0}^{\pi}\frac{\sin^{n-2}(t)}{\phi(b\cos (t))^n}dt} &\mbox{if } dV=dV_{BH} \\
 \frac{\int\limits_{0}^{\pi}\sin^{n-2}(t)T(b\cos (t))dt}{\int\limits_{0}^{\pi}\sin^{n-2}(t)dt} & \mbox{if } dV=dV_{HT} \end{cases}
 \end{equation}
 Then the volume form $dV$ is given by $$dV=f(b)dV_{\alpha}$$
 where, $dV_{\alpha}=\sqrt{det(a_{ij})}dx$ denotes the Riemannian volume form $\alpha$.
 \end{proposition}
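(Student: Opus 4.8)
The plan is to establish $dV=f(b)\,dV_{\alpha}$ pointwise, in a coordinate system adapted to $x$, and then invoke naturality. Fix $x\in M$. Since $b=\|\beta\|_{\alpha}$ is a scalar invariant, $f(b)$ is a well-defined function on $M$, and $dV$ and $dV_{\alpha}$ are both volume densities, so it suffices to check the equality in one convenient linear coordinate system on $T_xM$. I would choose coordinates $(y^i)$ orthonormal for $a_{ij}(x)$ and with $b_i(x)=(b,0,\dots,0)$ --- always possible, since any covector can be rotated onto the first axis by an $a$-orthogonal map. Then $a_{ij}(x)=\delta_{ij}$, so $\sqrt{\det(a_{ij})}=1$ and $dV_{\alpha}=dx$ at $x$, while $\alpha(x,y)=|y|$ (Euclidean norm), $\beta(x,y)=by^1$, and hence $F(x,y)=|y|\,\phi\!\left(by^1/|y|\right)$.

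For $dV=dV_{BH}$, I would pass to Euclidean polar coordinates $y=r\xi$ with $r>0$ and $\xi\in S^{n-1}$. Since $\phi$ is evaluated at the $0$-homogeneous ratio $s=\beta/\alpha$, we get $F(x,r\xi)=r\,\phi(b\xi^1)$, so that $D^n_x=\{r\xi:0\le r<\phi(b\xi^1)^{-1}\}$ and
\[
\mathrm{vol}(D^n_x)=\int_{S^{n-1}}\!\int_0^{\phi(b\xi^1)^{-1}}r^{n-1}\,dr\,d\sigma(\xi)=\frac1n\int_{S^{n-1}}\frac{d\sigma(\xi)}{\phi(b\xi^1)^{n}}.
\]
Parametrising $\xi^1=\cos t$, $t\in[0,\pi]$, with the remaining coordinates on a sphere of radius $\sin t$, the surface measure factors as $d\sigma=\sin^{n-2}(t)\,dt\,d\sigma_{S^{n-2}}$; the same computation with $\phi\equiv 1$ gives $\mathrm{vol}(B^n)=\frac1n\,\mathrm{vol}(S^{n-2})\int_0^{\pi}\sin^{n-2}(t)\,dt$. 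Dividing, the factor $\frac1n\mathrm{vol}(S^{n-2})$ cancels and $\sigma_{BH}(x)=\mathrm{vol}(B^n)/\mathrm{vol}(D^n_x)=f(b)$, i.e. $dV_{BH}=f(b)\,dx=f(b)\,dV_{\alpha}$ at $x$.

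For $dV=dV_{HT}$ the scheme is identical once one recalls that $\sigma_{HT}(x)=\frac{1}{\mathrm{vol}(B^n)}\int_{\{F(x,y)<1\}}\det\!\big(g_{ij}(x,y)\big)\,dy$ --- the volume of the dual unit ball, obtained from $D^n_x$ by the Legendre transform, whose Jacobian equals $\det(g_{ij})$ because $C_{ijk}y^j=0$. Here I would invoke the standard determinant identity for $(\alpha,\beta)$-metrics,
\[
\det(g_{ij})=\phi^{n+1}(\phi-s\phi')^{n-2}\big[(\phi-s\phi')+(b^2-s^2)\phi''\big]\det(a_{ij}),
\]
so that in the adapted coordinates the integrand restricted to $S^{n-1}$ depends only on $s=b\xi^1$; as $\det(g_{ij})$ is $0$-homogeneous in $y$, the radial integration again contributes the factor $\frac1n\phi(b\xi^1)^{-n}$, and $\det(g_{ij})\,\phi(b\xi^1)^{-n}$ collapses to exactly the function $T(b\xi^1)$ of \eqref{eqn3.1}. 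Substituting $\xi^1=\cos t$ and dividing by $\mathrm{vol}(B^n)$ expressed in the same coordinates yields $\sigma_{HT}(x)=f(b)$. In both cases, since the identity holds at an arbitrary $x$ and $f(b)$ is coordinate-independent, it holds in every chart, so $dV=f(b)\,dV_{\alpha}$ globally.

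I expect the only genuinely nonroutine step to be the determinant identity for $\det(g_{ij})$ of an $(\alpha,\beta)$-metric used in the Holmes--Thompson case; I would either cite it or derive it by differentiating $F^2=\alpha^2\phi^2$ twice in $y$, writing $g_{ij}$ as a scalar multiple of $a_{ij}$ plus rank-two corrections along $b_i$ and $y_i/\alpha$, and then applying the matrix determinant lemma. Everything else is spherical-coordinate bookkeeping, homogeneity of $F$ and of $g_{ij}$, and the invariance of both volume forms and the norm $b$ under change of coordinates.
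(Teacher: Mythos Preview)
The paper does not actually prove this proposition: it is quoted verbatim from Cheng--Shen \cite{XZ} and used as a black box in Theorem~\ref{thm1}. So there is no ``paper's own proof'' to compare against.

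That said, your outline is the standard argument and is essentially how the result is established in the original reference. The key moves---choosing $a$-orthonormal coordinates at $x$ with $\beta$ aligned along the first axis, writing $F(x,r\xi)=r\,\phi(b\xi^1)$, integrating radially, and then slicing $S^{n-1}$ by $\xi^1=\cos t$---are exactly right for the Busemann--Hausdorff case, and the cancellation of $\tfrac{1}{n}\mathrm{vol}(S^{n-2})$ is what produces the clean ratio in \eqref{eqn3.1}. For the Holmes--Thompson case your identification of the needed ingredient (the closed form of $\det(g_{ij})$ for an $(\alpha,\beta)$-metric, obtained via the matrix determinant lemma applied to the rank-two perturbation of $a_{ij}$) is correct; the resulting $0$-homogeneous factor is precisely the $T(s)=\phi(\phi-s\phi')^{n-2}\big[(\phi-s\phi')+(b^2-s^2)\phi''\big]$ that the statement leaves undefined. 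One cosmetic point: in the HT integral the radial contribution is $\int_0^{1/\phi}r^{n-1}\,dr=\tfrac{1}{n}\phi^{-n}$, and combining this with $\det(g_{ij})=\phi^{n+1}(\phi-s\phi')^{n-2}[\cdots]$ gives $\phi\cdot(\phi-s\phi')^{n-2}[\cdots]=T(s)$ directly, so your ``collapses to exactly $T$'' is accurate once the powers are tracked. Nothing is missing.
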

 
 \begin{theorem}\label{thm1}
 Let $F=\frac{\alpha^2}{\alpha-\beta}$, be the Matsumoto metric on a $2$-dimensional manifold $M$. Then Bausmann-Hausdorff volume form of $F$ is given by
 $$dV_{BH}=\frac{2}{2+b^2}\sqrt{det(a_{ij})}dx$$
 \end{theorem}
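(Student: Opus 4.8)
The plan is to apply Proposition \ref{prop1} directly with $n=2$. First I would rewrite the Matsumoto metric in standard $(\alpha,\beta)$-form: putting $s=\beta/\alpha$, one has $F=\dfrac{\alpha^2}{\alpha-\beta}=\dfrac{\alpha}{1-s}=\alpha\,\phi(s)$ with $\phi(s)=\dfrac{1}{1-s}$. The constraint $0\le b<1/2$ guarantees that $F$ is a genuine Finsler metric (strongly convex indicatrix), so Proposition \ref{prop1} applies, and in particular it ensures $1-b\cos t>0$ for every $t\in[0,\pi]$, so that the integrand appearing in \eqref{eqn3.1} is smooth and positive on the whole domain of integration.

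Next I would specialize \eqref{eqn3.1} to the Busemann--Hausdorff case with $n=2$. Since $\sin^{n-2}t=\sin^0 t=1$, the numerator is simply $\int_0^\pi dt=\pi$, while the denominator becomes $\int_0^\pi \phi(b\cos t)^{-2}\,dt=\int_0^\pi (1-b\cos t)^2\,dt$. Expanding the square and using $\int_0^\pi\cos t\,dt=0$ and $\int_0^\pi\cos^2 t\,dt=\pi/2$, this evaluates to $\pi-0+b^2\cdot\frac{\pi}{2}=\dfrac{(2+b^2)\pi}{2}$. Therefore $f(b)=\pi\big/\dfrac{(2+b^2)\pi}{2}=\dfrac{2}{2+b^2}$, and Proposition \ref{prop1} yields $dV_{BH}=f(b)\,dV_\alpha=\dfrac{2}{2+b^2}\sqrt{\det(a_{ij})}\,dx$, which is the claim.

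I do not expect any real obstacle in this argument: the content is entirely the elementary integral $\int_0^\pi(1-b\cos t)^2\,dt$ together with the observation that $\phi(s)=1/(1-s)$. The only point requiring a word of care is checking the hypotheses of Proposition \ref{prop1} — that $F$ is an honest $(\alpha,\beta)$-metric so the cited volume formula is valid, and that $\phi(b\cos t)$ never vanishes on $[0,\pi]$ — both of which are immediate from $0\le b<1/2$.
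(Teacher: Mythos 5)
Your proposal is correct and follows exactly the paper's own route: specialize Proposition \ref{prop1} to $n=2$ with $\phi(s)=\frac{1}{1-s}$, reduce the denominator to $\int_0^\pi(1-b\cos t)^2\,dt=\frac{(2+b^2)\pi}{2}$, and conclude $f(b)=\frac{2}{2+b^2}$. The only difference is that you spell out the elementary integral and the applicability of the proposition, which the paper leaves implicit.
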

 
 \begin{proof}
 For Matsumoto surface we have $\phi(s)=\frac{1}{1-s}$ and $n=2$. Therefore,  from \eqref{eqn3.1} we have
 \begin{equation}\label{eqn3.01}
 \begin{split}
 f(b) &= \frac{\int\limits_{0}^{\pi}dt}{\int\limits_{0}^{\pi}(1-b\cos t)^2dt} \\ & =\frac{2}{2+b^2}
 \end{split}
 \end{equation}
 Hence, the theorem follows.

  \end{proof}
  \begin{theorem}
  Let $\varphi:M^2 \to (V^3,F_b)$ be an immersion in a Matsumoto space with local coordinates $(\varphi^j(x))$. Then $\varphi$ is minimal if and only if 
   \begin{eqnarray}\label{MCE0}
    \frac{\partial^2 \varphi^j}{\partial x^{\epsilon}\partial x^{\eta}}v^i\left[\frac{2C^2+3E}{(2C^2+E)^2} \frac{\partial^2 C^2}{\partial z^i_{\epsilon}\partial z^j_{\eta}}-\frac{2C^2}{(2C^2+E)^2}\frac{\partial^2 E}{\partial z^i_{\epsilon}\partial z^j_{\eta}}\nonumber \right.\\ \left.- \frac{2(4C^4+12C^2E-12C^3-3E^2)}{(2C^2+E)^3} \frac{\partial C}{\partial z^i_{\epsilon}}\frac{\partial C}{\partial z^j_{\eta}}+\frac{4C^2}{(2C^2+E)^3} \frac{\partial E }{\partial z^i_\epsilon}\frac{\partial E }{\partial z^j_\eta}\nonumber\right.\\ \left.+\frac{4C^3-6CE}{(2C^2+E)^3}\left(\frac{\partial C }{\partial z^i_\epsilon}\frac{\partial E }{\partial z^j_\eta}+\frac{\partial E }{\partial z^i_\epsilon}\frac{\partial C }{\partial z^j_\eta} \right) \right]&=&0 \hspace{1.0cm}
    \end{eqnarray}
  \end{theorem}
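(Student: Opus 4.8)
The plan is to substitute the explicit Busemann--Hausdorff density into Shen's mean curvature formula and read off the differential equation. Since $(V^3,F_b)$ is a Minkowski space, the mean curvature of the immersion is the linear functional
$$\mathcal{H}_{\varphi}(v)=\frac{1}{\mathcal{F}}\,\frac{\partial^2 \mathcal{F}}{\partial z^i_{\epsilon}\partial z^j_{\eta}}\,\frac{\partial^2 \varphi^j}{\partial x^{\epsilon}\partial x^{\eta}}\,v^i ,$$
and $\mathcal{F}>0$, being the ratio of two positive Euclidean volumes. Hence $\varphi$ is minimal if and only if $\frac{\partial^2 \mathcal{F}}{\partial z^i_{\epsilon}\partial z^j_{\eta}}\,\frac{\partial^2 \varphi^j}{\partial x^{\epsilon}\partial x^{\eta}}\,v^i=0$ for all $v$, and the whole statement reduces to computing $\frac{\partial^2 \mathcal{F}}{\partial z^i_{\epsilon}\partial z^j_{\eta}}$ in closed form.

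First I would identify $\mathcal{F}$. The metric that $F_b$ induces on $M^2$ is $\varphi^*F_b=\frac{(\varphi^*\alpha)^2}{\varphi^*\alpha-\varphi^*\beta}$, again a Matsumoto metric: its Riemannian part is the induced metric $A=(A_{\epsilon\eta})$, and its one-form is $\varphi^*\beta=b\,z^{3}_{\epsilon}\,dx^{\epsilon}$, whose $A$-norm $\tilde b$ satisfies $\tilde b^2=b^2A^{\epsilon\eta}z^{3}_{\epsilon}z^{3}_{\eta}\le b^2<1/4$ (the bound $A^{\epsilon\eta}z^{3}_{\epsilon}z^{3}_{\eta}\le1$ being the fact that orthogonal projection onto the tangent plane does not increase Euclidean length). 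Applying Theorem \ref{thm1} to $(M^2,\varphi^*F_b)$ then gives $dV_{BH}=\frac{2}{2+\tilde b^2}\sqrt{\det A}\,dx$, that is
$$\mathcal{F}=\frac{2C^3}{2C^2+E},\qquad C:=\sqrt{\det A},\quad E:=b^2(\det A)\,A^{\epsilon\eta}z^{3}_{\epsilon}z^{3}_{\eta}=b^2\bigl(A_{22}(z^{3}_{1})^2-2A_{12}z^{3}_{1}z^{3}_{2}+A_{11}(z^{3}_{2})^2\bigr),$$
the last equality being the $2\times 2$ adjugate identity; note that $E$ and $C^2=\det A$ are polynomials in the $z^i_{\epsilon}$, only $C$ carrying a square root.

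Next comes the differentiation. Write $\mathcal{F}=\Phi(C,E)$ with $\Phi(C,E)=\frac{2C^3}{2C^2+E}$ and denote $C_{,i\epsilon}=\partial C/\partial z^i_{\epsilon}$, $C_{,i\epsilon j\eta}=\partial^2 C/\partial z^i_{\epsilon}\partial z^j_{\eta}$, likewise for $E$, and $\Phi_C,\Phi_E,\Phi_{CC},\dots$ for the partials of $\Phi$. The chain rule gives
$$\frac{\partial^2 \mathcal{F}}{\partial z^i_{\epsilon}\partial z^j_{\eta}}=\Phi_{CC}\,C_{,i\epsilon}C_{,j\eta}+\Phi_{CE}\bigl(C_{,i\epsilon}E_{,j\eta}+E_{,i\epsilon}C_{,j\eta}\bigr)+\Phi_{EE}\,E_{,i\epsilon}E_{,j\eta}+\Phi_{C}\,C_{,i\epsilon j\eta}+\Phi_{E}\,E_{,i\epsilon j\eta}.$$
The only term carrying a second derivative of the square root is $\Phi_{C}C_{,i\epsilon j\eta}$, which I would eliminate through $C_{,i\epsilon j\eta}=\frac{1}{2C}(C^2)_{,i\epsilon j\eta}-\frac{1}{C}C_{,i\epsilon}C_{,j\eta}$, so that the second-order data enter only via the polynomials $(C^2)_{,i\epsilon j\eta}$ and $E_{,i\epsilon j\eta}$. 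Substituting $\Phi_E=-\frac{2C^3}{(2C^2+E)^2}$, $\Phi_C=\frac{2C^2(2C^2+3E)}{(2C^2+E)^2}$ and the three second partials, and then factoring out the common $C$ (legitimate since $C>0$ for an immersion), turns $\frac{\partial^2 \mathcal{F}}{\partial z^i_{\epsilon}\partial z^j_{\eta}}$ into $C$ times the bracketed factor of \eqref{MCE0}. Since $C/\mathcal{F}>0$, contracting with $\frac{\partial^2\varphi^j}{\partial x^{\epsilon}\partial x^{\eta}}v^i$ shows that $\mathcal{H}_{\varphi}\equiv0$ if and only if \eqref{MCE0} holds.

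The one genuinely delicate point is the bookkeeping of that last substitution: one must distribute the contribution of $\Phi_{C}C_{,i\epsilon j\eta}$ correctly into the $(C^2)_{,i\epsilon j\eta}$- and $C_{,i\epsilon}C_{,j\eta}$-channels and check that every coefficient collapses to the displayed rational function of $C$ and $E$ with denominators $(2C^2+E)^2$ and $(2C^2+E)^3$. Everything else --- the Minkowski reduction of $\mathcal{H}_{\varphi}$, the positivity of $\mathcal{F}$, and the identification of the induced metric --- is immediate from the Preliminaries and Theorem \ref{thm1}.
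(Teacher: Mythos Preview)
Your proposal is correct and follows essentially the same route as the paper: identify $\mathcal{F}=\dfrac{2C^3}{2C^2+E}$ via Theorem~\ref{thm1}, differentiate twice in $z^i_{\epsilon}$, and insert into Shen's Minkowski mean-curvature formula. Your chain-rule packaging through $\Phi(C,E)$ and the explicit conversion $C_{,i\epsilon j\eta}=\tfrac{1}{2C}(C^2)_{,i\epsilon j\eta}-\tfrac{1}{C}C_{,i\epsilon}C_{,j\eta}$ make transparent the step the paper leaves implicit between its display for $\partial^2\mathcal{F}/\partial z^i_{\epsilon}\partial z^j_{\eta}$ (written with $\partial^2 C$) and the final equation \eqref{MCE0} (written with $\partial^2 C^2$ and an overall factor of $C$ removed).
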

  where, $C=\sqrt{det(A)}$ and
     \begin{equation}
     E=b^2\sum\limits_{k=1}^{3}(-1)^{\gamma + \tau}z^{k}_{\bar{\gamma}}z^{k}_{\bar{\tau}}z^{3}_{\gamma}z^{3}_{\tau}
     \end{equation}
  \begin{proof}
   From the discussion in Section $2$ the volume form of a Matsumoto metric can be written as 
   \begin{equation}
   dV_{BH}=\frac{2}{2+b^2A^{\epsilon \eta}z^3_{\epsilon}z^3_{\eta}}\sqrt{det(A)}dx^1dx^2.
   \end{equation}
  Let $B=b^2A^{\epsilon \eta}z^3_{\epsilon }z^3_{\eta}$. Then using \eqref{eqn3.01} in \eqref{eqn2.01}  one can write 
   \begin{equation}\label{eqn3.5}
   \mathcal{F}(x,z)=\frac{2}{2+B}C.
   \end{equation}
   Since $A_{\epsilon\eta}=\sum\limits_{i=1}^{3}z^i_{\epsilon}z^i_{\eta}$, its inverse matrix is given by,
   \begin{equation*}
   A^{\epsilon\eta}= \sum\limits_{i=1}^{3}\frac{(-1)^{\epsilon+\eta}}{det A}z^i_{\bar{\epsilon}}z^i_{\bar{\eta}}
   \end{equation*}
      Here the notation bar for any greek letters ranging from $1$ to $2$ is defined by
      \begin{equation*}
      \bar{\tau}=\delta_{\tau 2}+2\delta_{\tau 1}.
      \end{equation*}
 Hence we get 
   \begin{equation}
   B=b^2 \sum\limits_{i=1}^{3}\frac{(-1)^{\gamma + \tau}}{det A}z^{i}_{\bar{\gamma}}z^{i}_{\bar{\tau}}z^{3}_{\gamma}z^{3}_{\tau}. ,
   \end{equation}
   Hence, we have,
   \begin{equation}
   B=\frac{E}{C^2}
   \end{equation}
   where,
   \begin{equation}
   E=b^2\sum\limits_{k=1}^{3}(-1)^{\gamma + \tau}z^{k}_{\bar{\gamma}}z^{k}_{\bar{\tau}}z^{3}_{\gamma}z^{3}_{\tau}
   \end{equation}
   Therefore, \eqref{eqn3.5} becomes 
   \begin{equation}\label{eqn3.6}
   \mathcal{F}(x,z)=\frac{2C^3}{2C^2+E}.
    \end{equation}
    Differentiating \eqref{eqn3.6} with respect to $z^i_{\epsilon}$ we get
    \begin{equation}\label{eqn3.7}
    \frac{\partial \mathcal{F} }{\partial z^i_{\epsilon}}= \frac{4C^4+6C^2E}{(2C^2+E)^2}\frac{\partial C }{\partial z^i_{\epsilon}} - \frac{2C^3}{(2C^2+E)^2}\frac{\partial E }{\partial z^i_{\epsilon}}.
    \end{equation}
     Differentiating \eqref{eqn3.7} with respect to $z^j_{\eta}$ we get
     \begin{equation}\label{eqn3.8}
     \begin{split}
     \frac{\partial^2 \mathcal{F} }{\partial z^i_{\epsilon}\partial z^j_{\eta}}
     =\frac{4C^4+6C^2E}{(2C^2+E)^2}\frac{\partial^2 C}{\partial z^i_{\epsilon}\partial z^j_{\eta}}-\frac{2C^3}{(2C^2+E)^2}\frac{\partial^2 E}{\partial z^i_{\epsilon}\partial z^j_{\eta}}\hspace{2.0cm} \\+\frac{12CE^2-8C^3E}{(2C^2+E)^3}\frac{\partial C}{\partial z^i_{\epsilon}}\frac{\partial C}{\partial z^j_{\eta}}+\frac{4C^4-6C^2E}{(2C^2+E)^3}\left(\frac{\partial C }{\partial z^i_\epsilon}\frac{\partial E }{\partial z^j_\eta}+\frac{\partial E }{\partial z^i_\epsilon}\frac{\partial C }{\partial z^j_\eta} \right)\\+\frac{4C^3}{(2C^2+E)^3} \frac{\partial E }{\partial z^i_\epsilon}\frac{\partial E }{\partial z^j_\eta}.
     \end{split}
     \end{equation}
     The Matsumoto metric has vanishing mean curvature iff 
     \begin{equation}\label{eqn3.9}
     \frac{\partial^2 \mathcal{F} }{\partial z^i_{\epsilon}\partial z^j_{\eta}}\frac{\partial^2 \varphi^j}{\partial x^{\epsilon}\partial x^{\eta}}v^i=0.
     \end{equation}
     Therefore, using \eqref{eqn3.8} in \eqref{eqn3.9} we obtain \eqref{MCE0}.
  \end{proof}
 
 \section{The characterization of minimal surfaces which are the graph of a function}
 In this section we study the graph of a function $M^2$ in Matsumoto space $(V^3,F_b)$, where $V^3$ is a real vectoe space and $F_b=\frac{{\tilde{\alpha}}^2}{\tilde{\alpha}-\tilde{\beta}}$ is a Matsumoto metric, where $\tilde{\alpha}$ is the Euclidean metric and $\tilde{\beta}=bdx^3$ is a one-form. Here we consider the immersion $\varphi : U \subset \mathbb{R}^2 \to (V^3, F_b)$ given by $\varphi(x^1, x^2) =(x^1, x^2, f(x^1, x^2))$. At first we show that the pullback metric of $F_b$ bt $\varphi$ is again a Matsumoto metric and then find the characterization equation the surface to be minimal.
 \begin{proposition}
 Let $F_b=\frac{{\tilde{\alpha}}^2}{\tilde{\alpha}-\tilde{\beta}}$ is a Matsumoto metric, where $\tilde{\alpha}$ is the Euclidean metric and $\tilde{\beta}=bdx^3$ is a one-form on the real vector space $V^3$. Now suppose $\varphi : U \subset \mathbb{R}^2 \to (V^3, F_b)$ given by $\varphi(x^1, x^2) =(x^1, x^2, f(x^1, x^2))$, where $f$ is a real valued smooth function be an immersion Then the pullback metric on $U$ defined by \eqref{eqn2.1} is again a Matsumoto metric.
 \end{proposition}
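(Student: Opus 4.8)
The plan is to compute the pullbacks $\varphi^{*}\tilde\alpha$ and $\varphi^{*}\tilde\beta$ explicitly, recognise $\varphi^{*}F_{b}$ as an $(\alpha,\beta)$-metric of Matsumoto shape, and then verify that the induced one-form satisfies the admissibility bound that makes this shape a genuine Finsler metric.

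First I would write down the differential of $\varphi$. Setting $f_{\epsilon}=\partial f/\partial x^{\epsilon}$, one has $\varphi_{*}(y)=\bigl(y^{1},\,y^{2},\,f_{1}y^{1}+f_{2}y^{2}\bigr)$ for $y\in T_{x}U$. Substituting this into $\tilde\alpha=\sqrt{(y^{1})^{2}+(y^{2})^{2}+(y^{3})^{2}}$ gives $(\varphi^{*}\tilde\alpha)^{2}=a_{\epsilon\eta}y^{\epsilon}y^{\eta}$ with $a_{\epsilon\eta}=\delta_{\epsilon\eta}+f_{\epsilon}f_{\eta}$; the matrix $(a_{\epsilon\eta})=I+(\nabla f)(\nabla f)^{T}$ is positive definite, so $\alpha:=\sqrt{a_{\epsilon\eta}y^{\epsilon}y^{\eta}}$ is an honest Riemannian metric on $U$. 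Similarly $\tilde\beta=b\,dx^{3}$ pulls back to the one-form $\beta:=(\varphi^{*}\tilde\beta)(y)=b\,(f_{1}y^{1}+f_{2}y^{2})=b_{\epsilon}y^{\epsilon}$ with $b_{\epsilon}=b f_{\epsilon}$. By the definition \eqref{eqn2.1} of the induced metric,
$$\varphi^{*}F_{b}=\frac{(\varphi^{*}\tilde\alpha)^{2}}{\varphi^{*}\tilde\alpha-\varphi^{*}\tilde\beta}=\frac{\alpha^{2}}{\alpha-\beta},$$
which already has the algebraic form of a Matsumoto metric.

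It then remains to check that $\beta$ is admissible, namely that its $\alpha$-norm $\|\beta\|_{\alpha}$ stays below $1/2$, which is exactly the threshold making $\alpha^{2}/(\alpha-\beta)$ a strongly convex Finsler metric. For this I would invert $(a_{\epsilon\eta})$ — either directly, it being $2\times2$, or by the Sherman--Morrison formula — to get $a^{\epsilon\eta}=\delta^{\epsilon\eta}-\dfrac{f_{\epsilon}f_{\eta}}{1+|\nabla f|^{2}}$, where $|\nabla f|^{2}=f_{1}^{2}+f_{2}^{2}$, and then compute
$$\|\beta\|_{\alpha}^{2}=a^{\epsilon\eta}b_{\epsilon}b_{\eta}=b^{2}\!\left(|\nabla f|^{2}-\frac{|\nabla f|^{4}}{1+|\nabla f|^{2}}\right)=\frac{b^{2}\,|\nabla f|^{2}}{1+|\nabla f|^{2}}.$$
Since $|\nabla f|^{2}/(1+|\nabla f|^{2})<1$ and $0\le b<1/2$, this yields $\|\beta\|_{\alpha}^{2}<b^{2}<1/4$, hence $\|\beta\|_{\alpha}<1/2$, so $\varphi^{*}F_{b}$ is a Matsumoto metric on $U$.

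I do not expect a serious obstacle here: the computation is essentially a one-line use of Sherman--Morrison, and the only point needing care is to pin down that the sharp admissibility bound for the Matsumoto shape is $\|\beta\|_{\alpha}<1/2$ and to observe that this bound is not merely preserved but strictly improved by the pullback, because the scalar factor $|\nabla f|^{2}/(1+|\nabla f|^{2})$ is always strictly less than $1$. If one preferred not to invoke the admissibility threshold as a black box, one could instead verify strong convexity of $\alpha^{2}/(\alpha-\beta)$ directly from positivity of the Hessian of $F^{2}/2$, but the bound above makes that unnecessary.
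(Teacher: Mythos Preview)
Your argument is correct and follows essentially the same route as the paper: compute $\varphi^{*}\tilde\alpha$ and $\varphi^{*}\tilde\beta$ and observe that $\varphi^{*}F_{b}=\alpha^{2}/(\alpha-\beta)$ has Matsumoto form. You go one step further than the paper by actually verifying the admissibility bound $\|\beta\|_{\alpha}<1/2$ via the inverse metric; the paper's own proof stops at the algebraic form and does not check this, so your version is in fact more complete.
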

 \begin{proof}
 We have,
 \begin{equation}
F_b=\frac{{\tilde{\alpha}}^2}{\tilde{\alpha}-\tilde{\beta}}=\frac{(d\tilde{x}^1)^2+(d\tilde{x}^2)^2+(d\tilde{x}^3)^2}{\sqrt{(d\tilde{x}^1)^2+(d\tilde{x}^2)^2+(d\tilde{x}^3)^2}-bd\tilde{x}^3}
 \end{equation}
 Now,
 \begin{equation}
 \begin{split}
 \varphi^*(d\tilde{x}^1)=dx^1, \quad \varphi^*(d\tilde{x}^2)=dx^2, \\ \varphi^*(d\tilde{x}^3)=(d(f(x^1, x^2)))=f_{x^1}dx^1+f_{x^2}dx^2
 \end{split}
 \end{equation}
 Therefore,
 \begin{equation}
 \varphi^*F_b=\frac{(1+f^2_{x^1})(dx^1)^2+2f_{x^1}f_{x^2}dx^1dx^2+(1+f^2_{x^2})(dx^2)^2}{\sqrt{(1+f^2_{x^1})(dx^1)^2+2f_{x^1}f_{x^2}dx^1dx^2+(1+f^2_{x^2})(dx^2)^2}-b(f_{x^1}dx^1+f_{x^2}dx^2)}
 \end{equation}
 which is a Matsumoto metric of the form $\frac{\alpha^2}{\alpha-\beta}$, where
 \begin{equation*}
 \alpha^2=(1+f^2_{x^1})(dx^1)^2+2f_{x^1}f_{x^2}dx^1dx^2+(1+f^2_{x^2})(dx^2)^2
 \end{equation*}
 is a Riemannian metric and
 \begin{equation*}
 \beta=b(f_{x^1}dx^1+f_{x^2}dx^2)
 \end{equation*}
 is a one-form.
 \end{proof}
 \begin{theorem}\label{thm4.1}
 An immersion $\varphi : U \subset \mathbb{R}^2 \to (V^3, F_b)$ given by $\varphi(x^1, x^2) =(x^1, x^2, f(x^1, x^2))$ is minimal, if and only if, $f$ satisfies
\begin{equation}\label{eqn4.0}
\sum\limits_{ \epsilon, \eta=1,2}\left[ T_b(T_b-2b^2)\left(\delta_{\epsilon \eta}-\frac{f_{x^{\epsilon}}f_{x^{\eta}}}{W^2} \right)+ 4b^2(T_b +4 b^2) \frac{f_{x^{\epsilon}}f_{x^{\eta}}}{W^2} \right] f_{x^{\epsilon}x^{\eta}}= 0,
\end{equation}
 where,
 $W^2 = 1 + f^2_{x^1}+ f^2_{x^2}, \qquad  T_b = 2W^2 + b^2(W^2-1)$.
  \end{theorem}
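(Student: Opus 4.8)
The plan is to specialize the characteristic equation \eqref{MCE0} (equivalently, the formula \eqref{eqn3.8}) to the graph immersion $\varphi(x^1,x^2)=(x^1,x^2,f(x^1,x^2))$, for which all first-order data are explicit: $z^1_\epsilon=\delta_{1\epsilon}$, $z^2_\epsilon=\delta_{2\epsilon}$, $z^3_\epsilon=f_{x^\epsilon}$, and the only non-vanishing second derivatives are $\partial^2\varphi^3/\partial x^\epsilon\partial x^\eta=f_{x^\epsilon x^\eta}$. Since the graph has codimension one, $\mathcal{H}_\varphi$ is a linear form that, as recalled in Section~2, vanishes on the $2$-dimensional subspace $\varphi_*(TM)$; hence $\varphi$ is minimal if and only if $\mathcal{H}_\varphi(e_3)=0$ for the transversal vector $e_3=(0,0,1)$. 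Feeding $v=e_3$ into the Minkowski expression of $\mathcal{H}_\varphi$ and using that $\partial^2\varphi^j/\partial x^\epsilon\partial x^\eta$ survives only for $j=3$, this reduces to the single scalar equation $\sum_{\epsilon,\eta}\bigl(\partial^2\mathcal{F}/\partial z^3_\epsilon\partial z^3_\eta\bigr)f_{x^\epsilon x^\eta}=0$.

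First I would evaluate the two building blocks $C=\sqrt{\det A}$ and $E$ along this slice. From $A_{\epsilon\eta}=\delta_{\epsilon\eta}+f_{x^\epsilon}f_{x^\eta}$ one gets $\det A=1+f^2_{x^1}+f^2_{x^2}=W^2$, so $C=W$; and from the displayed formula for $E$ (or, more quickly, from $E=C^2B$ with $B=b^2A^{\epsilon\eta}z^3_\epsilon z^3_\eta$, which for the graph collapses to $b^2(f^2_{x^1}+f^2_{x^2})/W^2$) one gets $E=b^2(W^2-1)$. Hence $2C^2+E=T_b$ and, by \eqref{eqn3.6}, $\mathcal{F}=2C^3/(2C^2+E)=2W^3/T_b$. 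As a cross-check, the preceding Proposition already shows that the induced metric is a Matsumoto metric, with Riemannian part of determinant $W^2$ and one-form of $\alpha$-norm squared $b^2(W^2-1)/W^2$; plugging this into Theorem~\ref{thm1} reproduces the same area integrand $2W^3/T_b$.

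Next I would compute the $z^3$-derivatives occurring in the characteristic equation, restricted to the slice; this is elementary because there $C$ and $E$ are functions of $|p|^2:=f^2_{x^1}+f^2_{x^2}$ alone. One finds $\partial C/\partial z^3_\epsilon=f_{x^\epsilon}/W$ and $\partial^2C/\partial z^3_\epsilon\partial z^3_\eta=W^{-1}(\delta_{\epsilon\eta}-f_{x^\epsilon}f_{x^\eta}/W^2)$; $\partial E/\partial z^3_\epsilon=2b^2f_{x^\epsilon}$ and $\partial^2E/\partial z^3_\epsilon\partial z^3_\eta=2b^2\delta_{\epsilon\eta}$; and $\partial C^2/\partial z^3_\epsilon=2f_{x^\epsilon}$, $\partial^2C^2/\partial z^3_\epsilon\partial z^3_\eta=2\delta_{\epsilon\eta}$. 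Substituting all of these, together with $C=W$, $E=b^2(W^2-1)$ and $2C^2+E=T_b$, into the characteristic equation produces a symmetric tensor of the shape $P(W,b)\,\delta_{\epsilon\eta}+Q(W,b)\,f_{x^\epsilon}f_{x^\eta}$ contracted against $f_{x^\epsilon x^\eta}$; clearing the common denominator and regrouping the coefficients $P$ and $Q$ as polynomials in $W^2$ and $b^2$ then yields exactly \eqref{eqn4.0}.

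The conceptual content is light: minimality of a graph is just the vanishing of one second-order quasilinear operator built from the Hessian of the area integrand, and all the ingredients are already available. The only real obstacle is the algebraic bookkeeping in the last step: the substitution generates a sizeable number of rational terms in $C$ and $E$, and one must verify carefully that, after using $E=b^2(W^2-1)$, every cross-term collapses into the two compact coefficients displayed in \eqref{eqn4.0}. I would organize this by reducing everything to the common denominator $T_b^{\,3}$ (up to a power of $W$), separating the $\delta_{\epsilon\eta}$ and $f_{x^\epsilon}f_{x^\eta}$ parts, and simplifying each numerator using repeatedly $C^2=W^2$, $E=b^2(W^2-1)$ and $T_b=(2+b^2)W^2-b^2$.
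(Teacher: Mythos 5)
Your reduction is methodologically sound and is in essence the paper's argument in a cleaner coordinate: instead of the paper's transversal vector $v=\varphi_{x^1}\times\varphi_{x^2}=(-f_{x^1},-f_{x^2},1)$ you take $v=e_3$, which is legitimate since $\varphi_{x^1}\times\varphi_{x^2}=W^2e_3$ modulo $\varphi_*(TM)$ and $\mathcal{H}_\varphi$ vanishes on $\varphi_*(TM)$. With that choice only the block $i=j=3$ of the Hessian of $\mathcal{F}$ survives, and since on the slice $z^1=(1,0)$, $z^2=(0,1)$, $z^3=\nabla f$ the integrand is the radial function $G(u)=2(1+u)^{3/2}/\bigl(2+(2+b^2)u\bigr)$, $u=|\nabla f|^2$, the whole theorem collapses to the one-variable statement $\sum_{\epsilon,\eta}\bigl(2G'\delta_{\epsilon\eta}+4G''f_{x^{\epsilon}}f_{x^{\eta}}\bigr)f_{x^{\epsilon}x^{\eta}}=0$, i.e.\ the Euler--Lagrange equation of $\int G(|\nabla f|^2)\,dx$. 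All your intermediate data ($C=W$, $E=b^2(W^2-1)$, and the listed first and second derivatives of $C$, $C^2$, $E$) are correct. The paper instead keeps $v=\varphi_{x^1}\times\varphi_{x^2}$, uses $\partial C/\partial z^i_{\epsilon}v^i=0$ to discard the $C_z\otimes C_z$ terms, and assembles the result from the contractions \eqref{eqn4.3}--\eqref{eqn4.8}; the two routes differ only by the overall factor $W^2$.

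The gap is that the decisive step --- ``the substitution \dots\ yields exactly \eqref{eqn4.0}'' --- is asserted rather than performed, and when performed it does \emph{not} yield \eqref{eqn4.0}. From your own formulas one gets $G'=W(T_b-2b^2)/T_b^2$ and $G''=\bigl(-\tfrac{1}{2}T_b^2+2b^2T_b+4b^4\bigr)/(WT_b^3)$, so after multiplying through by $T_b^3/(2W)$ the minimality condition reads
\begin{equation*}
\sum_{\epsilon,\eta=1,2}\Bigl[T_b(T_b-2b^2)\Bigl(\delta_{\epsilon\eta}-\frac{f_{x^{\epsilon}}f_{x^{\eta}}}{W^2}\Bigr)+2b^2(T_b+4b^2)\,\frac{f_{x^{\epsilon}}f_{x^{\eta}}}{W^2}\Bigr]f_{x^{\epsilon}x^{\eta}}=0,
\end{equation*}
with $2b^2(T_b+4b^2)$ where \eqref{eqn4.0} has $4b^2(T_b+4b^2)$. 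The same coefficient $2b^2(T_b+4b^2)$ is obtained if one instead follows the paper's route, feeding \eqref{eqn4.4}--\eqref{eqn4.8} into \eqref{MCE001}: the cross terms there contribute $b^2(4W^2-6E)+8b^4W^2=2b^2(T_b+4b^2)$ per unit of $2W^2$, so the step from \eqref{MCE001} to \eqref{eqn4.my} contains a factor-of-two slip. The discrepancy is invisible at $b=0$ (both versions reduce to the classical minimal surface equation) and does not change the sign structure, hence the ellipticity and the Bernstein-type conclusion survive; but since the precise coefficients are the entire content of Theorem \ref{thm4.1}, you must actually carry out the final substitution, and doing so honestly forces you to correct the stated equation rather than reproduce it.
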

 \begin{proof}
  The mean curvature vanishes on tangent vectors of the immersion $\varphi$. Therefore, we need to consider a vector field $v$ such that the set $\left\lbrace v, \varphi_{x^1}, \varphi_{x^2} \right\rbrace $ is linearly independent. Therefore, we consider $v=\varphi_{x^1}\times \varphi_{x^2}$. Then $v=\left( v^1,v^2,v^3\right)=\left(-f_{x^1}, -f_{x^2}, 1 \right)$.
 Here we have
 \begin{equation}\label{eqn4.1}
 A = 
 \begin{pmatrix}
 1+f^2_{x^1} & f_{x^1}f_{x^2} \\
 f_{x^1}f_{x^2} & 1+f^2_{x^2} \\
 \end{pmatrix},
 \end{equation}
 \begin{equation}\label{eqn4.2}
 C=\sqrt{det A}=W, \quad E=b^2\left( W^2-1\right), 
 \end{equation}
 
 By some simple calculations we can have
 \begin{equation}\label{eqn4.3}
 \frac{\partial C}{\partial z^i_{\epsilon}}v^i=0,
 \end{equation}
 \begin{equation}\label{eqn4.4}
 \frac{\partial E}{\partial z^i_{\epsilon}}v^i= 2b^2(\delta_{\epsilon 1}f_{x^1}+\delta_{\epsilon 2}f_{x^2}),
 \end{equation}
 \begin{equation}\label{eqn4.5}
 \frac{\partial C}{\partial z^j_{\eta}}\frac{\partial^2\varphi^j} {\partial x^{\epsilon}\partial x^{\eta}}v^i= \frac{f_{x^1}f_{x^{\epsilon}x^1}+f_{x^2}f_{x^{\epsilon}x^2}}{W},
 \end{equation}
 \begin{equation}\label{eqn4.6}
 \frac{\partial E}{\partial z^j_{\eta}}\frac{\partial^2\varphi^j} {\partial x^{\epsilon}\partial x^{\eta}}= 2b^2(f_{x^1}f_{x^{\epsilon}x^1}+f_{x^2}f_{x^{\epsilon}x^2}),
 \end{equation}
 \begin{equation}\label{eqn4.7}
 \frac{\partial^2 E}{\partial z^i_{\epsilon}\partial z^j_{\eta}}\frac{\partial^2\varphi^j} {\partial x^{\epsilon}\partial x^{\eta}}v^i= 2b^2\left[ (1+f^2_{x^2})f_{x^1x^1}-2f_{x^1}f_{x^2}f_{x^1x^2}+(1+f^2_{x^1})f_{x^2x^2}\right], 
 \end{equation}
 \begin{equation}\label{eqn4.8}
 \frac{1}{2}\frac{\partial^2 C^2}{\partial z^i_{\epsilon}\partial z^j_{\eta}}\frac{\partial^2\varphi^j} {\partial x^{\epsilon}\partial x^{\eta}}v^i= \left[ (1+f^2_{x^2})f_{x^1x^1}-2f_{x^1}f_{x^2}f_{x^1x^2}+(1+f^2_{x^1})f_{x^2x^2}\right].
 \end{equation}
 Using \eqref{eqn4.3} in \eqref{MCE0} we have
 \begin{eqnarray}\label{MCE001}
 \frac{\partial^2 \varphi^j}{\partial x^{\epsilon}\partial x^{\eta}}v^i\left[ \frac{\partial^2 C^2}{\partial z^i_{\epsilon}\partial z^j_{\eta}}(2C^2+3E)(2C^2+E)-\frac{\partial^2 E}{\partial z^i_{\epsilon}\partial z^j_{\eta}} 2C^2(2C^2+E)\nonumber \right.\\ \left.+\left\lbrace\frac{\partial E }{\partial z^i_\epsilon}\frac{\partial C }{\partial z^j_\eta} \left(4C^3-6CE \right)+4C^2 \frac{\partial E }{\partial z^i_\epsilon}\frac{\partial E }{\partial z^j_\eta} \right\rbrace  \right]&=&0. \hspace{1.0cm}
 \end{eqnarray}
 Let $T_b=2C^2+E$. Then we have the followings:
 \begin{equation}
 \begin{split}
  T_b=2b^2+b^2(W^2-1), \quad 2C^2+3E=2W^2+3b^2(W^2-1), \\ \left(4C^3-6CE \right)=2W\left\lbrace T_b-4b^2(W^2-1)\right\rbrace.
 \end{split}
 \end{equation}
 Putting all these values in \eqref{MCE001} we get
 \begin{equation}\label{eqn4.my}
 \begin{split}
  T_b(T_b-2b^2)\left[ (1+f^2_{x^2})f_{x^1x^1}-2f_{x^1}f_{x^2}f_{x^1x^2}+(1+f^2_{x^1})f_{x^2x^2}\right]\\+4b^2(T_b+4b^2)\left[ f^2_{x^1}f_{x^1x^1}+2f_{x^1}f_{x^2}f_{x^1x^2}+f^2_{x^2}f_{x^2x^2}\right]=0
 \end{split}
 \end{equation}
 Equation \eqref{eqn4.my} can be written as
 \begin{equation}
 \begin{split}
\left[T_b(T_b-2b^2)\left(W^2-f^2_{x^1} \right)+ 4b^2(T_b+4b^2)f^2_{x^1} \right]  f_{x^1x^1}\\ -2\left[ T_b(T_b-2b^2)-4b^2(T_b+4b^2)\right]f_{x^1}f_{x^2} f_{x^1x^2}\\
+\left[ T_b(T_b-2b^2)\left(W^2-f^2_{x^2} \right)+ 4b^2(T_b+4b^2)f^2_{x^2}\right] f_{x^2x^2} 
 \end{split}
 \end{equation}
 The above equation is equivalent to \eqref{eqn4.0}. Hence, we complete the proof.
 
   \end{proof}
     \begin{theorem}\label{thm2}
    An immersion $\varphi : U \subset \mathbb{R}^2 \to (V^3, F_b)$ given by $\varphi(x^1, x^2) =(x^1, x^2, f(x^1, x^2))$ is minimal, if and only if, $f$ satisfies
   \begin{eqnarray}\label{eqn4.25}
   \sum\limits_{ \epsilon, \eta =1,2}\left[ S_b(S_b-2b^2w^2)\left(\delta_{\epsilon\eta}-\frac{f_{x^{\epsilon}}f_{x^{\eta}}}{W^2} \right)\nonumber \right.\\ \left.+ 4b^2(S_b +4 b^2w^2)\left( k_{\epsilon}+\frac{f_{x^{\epsilon}}}{W^2}\right)\left( k_{\eta}+\frac{f_{x^{\eta}}}{W^2}\right) \right] f_{x^{\epsilon}x^{\eta}}&=& 0
   \end{eqnarray}
    where $k_i$ are real numbers such that $\sum\limits_{i=1}^{3}k^2_i=1$ and
    \begin{equation}
    W^2 = 1 + f^2_{x^1}+ f^2_{x^2},\quad  S_b = b^2 + (2+b^2)W^2, \quad w=-k_1f_{x^1}-k_2f_{x^2}+k_3.
    \end{equation}
     \end{theorem}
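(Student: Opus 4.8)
The plan is to run the scheme of the proof of Theorem~\ref{thm4.1} essentially unchanged, altering only the data attached to $\beta$. First I would record the $1$-jet of the graph, $z^1_\epsilon=\delta_{1\epsilon}$, $z^2_\epsilon=\delta_{2\epsilon}$, $z^3_\epsilon=f_{x^\epsilon}$, so that the induced metric $A$ is again \eqref{eqn4.1} and $C=\sqrt{\det A}=W$. With $\tilde\beta=b\,(k_1\,d\tilde x^1+k_2\,d\tilde x^2+k_3\,d\tilde x^3)$ one has $\varphi^{*}\tilde\beta=b\big[(k_1+k_3f_{x^1})\,dx^1+(k_2+k_3f_{x^2})\,dx^2\big]$, and, exactly as when $(k_1,k_2,k_3)=(0,0,1)$, this exhibits $\varphi^{*}F_b$ as a Matsumoto metric whose Riemannian part is $A$ and whose one-form has $A$-norm $\sqrt B$ with $B:=b^{2}A^{\epsilon\eta}(k_iz^i_\epsilon)(k_jz^j_\eta)$; here $B<1/4$ because $B$ is the squared length of the orthogonal projection of the slope vector $bk$ onto the tangent plane. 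Hence, by Proposition~\ref{prop1} with $n=2$ (equivalently, by Theorem~\ref{thm1} applied to the pulled-back data), the density $\mathcal F$ of \eqref{eqn2.01} equals $2C/(2+B)$. A short computation --- most transparently via $B=\|(bk)^{\mathrm{tan}}\|^{2}=b^{2}-\|(bk)^{\perp}\|^{2}$ with $(bk)^{\perp}=\tfrac{bw}{W^{2}}(\varphi_{x^1}\times\varphi_{x^2})$ --- gives $B=b^{2}(W^{2}-w^{2})/W^{2}$, where $w=-k_1f_{x^1}-k_2f_{x^2}+k_3=\langle k,\varphi_{x^1}\times\varphi_{x^2}\rangle$ is the normal component of $k$. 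Setting $E:=C^{2}B=b^{2}(W^{2}-w^{2})$ therefore puts $\mathcal F$ into the form \eqref{eqn3.6}, namely $\mathcal F=2C^{3}/(2C^{2}+E)$; at $(k_1,k_2,k_3)=(0,0,1)$ (so $w=1$) this recovers the value $E=b^{2}(W^{2}-1)$ of \eqref{eqn4.2}.

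\textbf{Reduction to the master equation.} The derivation of the minimality criterion \eqref{MCE0} from $\mathcal F=2C^{3}/(2C^{2}+E)$ in Section~3 never uses the particular form of $E$, so $\varphi$ is minimal if and only if \eqref{MCE0} holds with $C=W$ and the above $E$. Next, the identity $\bigl(\partial C/\partial z^i_\epsilon\bigr)v^i=0$ of \eqref{eqn4.3} for the Gauss map $v=\varphi_{x^1}\times\varphi_{x^2}=(-f_{x^1},-f_{x^2},1)$ persists --- it merely reflects that $C$ depends on $z$ only through the induced metric and that $v$ is normal --- so the $\partial C\,\partial C$ term and one half of each mixed $\partial C\,\partial E$ term of \eqref{MCE0} drop out under contraction, leaving exactly the reduced equation \eqref{MCE001} with the new $E$. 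Into \eqref{MCE001} I would substitute the two $\beta$-free contractions $\tfrac{1}{2}\bigl(\partial^{2}C^{2}/\partial z^i_\epsilon\partial z^j_\eta\bigr)\bigl(\partial^{2}\varphi^j/\partial x^\epsilon\partial x^\eta\bigr)v^i$ of \eqref{eqn4.8} and $\bigl(\partial C/\partial z^j_\eta\bigr)\bigl(\partial^{2}\varphi^j/\partial x^\epsilon\partial x^\eta\bigr)$ of \eqref{eqn4.5}, which are unchanged, together with the three $\beta$-dependent contractions $\bigl(\partial E/\partial z^i_\epsilon\bigr)v^i$, $\bigl(\partial E/\partial z^j_\eta\bigr)\bigl(\partial^{2}\varphi^j/\partial x^\epsilon\partial x^\eta\bigr)$ and $\bigl(\partial^{2}E/\partial z^i_\epsilon\partial z^j_\eta\bigr)\bigl(\partial^{2}\varphi^j/\partial x^\epsilon\partial x^\eta\bigr)v^i$, which are the generalizations of \eqref{eqn4.4}, \eqref{eqn4.6} and \eqref{eqn4.7}. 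To obtain these I would differentiate $E=b^{2}\big[A_{22}(\hat{z}^{3}_{1})^{2}-2A_{12}\hat{z}^{3}_{1}\hat{z}^{3}_{2}+A_{11}(\hat{z}^{3}_{2})^{2}\big]$ with $\hat{z}^{3}_{\epsilon}=k_1z^1_\epsilon+k_2z^2_\epsilon+k_3z^3_\epsilon$; contracting the result against $v$ and against $\partial^{2}\varphi^j/\partial x^\epsilon\partial x^\eta$ is precisely what produces the vector $k_\epsilon+f_{x^\epsilon}/W^{2}$ and the scalar $w$ that appear in \eqref{eqn4.25}.

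\textbf{Assembly and the main difficulty.} Substituting these six contractions into \eqref{MCE001}, clearing the common denominator and collecting the coefficient of each $f_{x^\epsilon x^\eta}$, I expect the left-hand side to collapse --- once it is rewritten in terms of the abbreviations $S_b$ and $w$ of the statement, $S_b$ being the counterpart, for a general slope direction, of the quantity $T_b=2C^{2}+E$ of Theorem~\ref{thm4.1} --- to equation \eqref{eqn4.25}. The whole weight of the argument, and its only genuine obstacle, is the evaluation of those three $\beta$-dependent contractions and the ensuing algebra: in contrast with the case $\tilde\beta=b\,d\tilde x^3$, both $A_{\epsilon\eta}$ and $\hat{z}^{3}_{\epsilon}$ now depend on every entry of $z$, so $\partial E/\partial z^m_\mu$ and $\partial^{2}E/\partial z^m_\mu\partial z^n_\nu$ each split into several families of terms, and folding the contracted expression back into the compact two-term shape \eqref{eqn4.25} requires a careful use of $k_1^{2}+k_2^{2}+k_3^{2}=1$ and of the explicit inverse of the matrix \eqref{eqn4.1}. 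A convenient running check is that every intermediate identity must specialize, at $(k_1,k_2,k_3)=(0,0,1)$ and hence $w=1$, to the corresponding step \eqref{eqn4.3}--\eqref{eqn4.8} in the proof of Theorem~\ref{thm4.1}, and so the final equation to \eqref{eqn4.0}.
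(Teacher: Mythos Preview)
Your proposal is correct and follows the same overall scheme as the paper: reduce \eqref{MCE0} to the simplified form \eqref{MCE001} via $(\partial C/\partial z^i_\epsilon)v^i=0$, evaluate the six $C$- and $E$-contractions, and collect the coefficients of $f_{x^\epsilon x^\eta}$.

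The one genuine difference is where the free unit vector $k$ is inserted. The paper keeps $\tilde\beta=b\,d\tilde x^3$ fixed and instead rotates the \emph{immersion}, writing $\varphi(x^1,x^2)=(x^1,x^2,f)\,(m_{ij})$ for an orthogonal matrix $(m_{ij})$ with $k_i=m_{3i}$; then $z^i_\eta=m_{\eta i}+f_{x^\eta}m_{3i}$ and $v^i=-f_{x^1}m_{1i}-f_{x^2}m_{2i}+m_{3i}$, while the expression of $E$ as a function of $z$ is the one already used in Section~3. You instead keep the standard graph parametrization and rotate the \emph{one-form} to $\tilde\beta=b\,k_i\,d\tilde x^i$, which leaves $z^i_\epsilon$, $v^i$ and $\partial^2\varphi^j/\partial x^\epsilon\partial x^\eta$ exactly as in Theorem~\ref{thm4.1} but modifies the formula for $E$ in terms of $z$ (your $\hat z^3_\epsilon=k_iz^i_\epsilon$). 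The two setups are interchanged by the $O(3)$-invariance of the Euclidean metric $\tilde\alpha$, and both produce $C=W$, $E=b^2(W^2-w^2)$, so the downstream algebra and the final equation \eqref{eqn4.25} coincide; your specialization check at $k=(0,0,1)$ is exactly the Remark that follows the paper's proof. Your parametrization has the mild practical advantage that only the three $E$-contractions need to be redone, whereas in the paper's version every $z$-derivative carries the entries of $(m_{ij})$ and one must repeatedly use orthogonality relations such as $\sum_i z^i_\eta v^i=0$ and $\sum_i z^i_\eta m_{3i}=f_{x^\eta}$ to simplify.
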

       \begin{proof}
       The proof of this theorem is similar to the previous theorem. Let us consider the  immersion $\varphi$ is a graph of a function over an open
       subset of a plane of $V^3$. Then $\varphi$ can be written in the form
       \begin{equation}
       \varphi(x^1,x^2)=\left( x^1,x^2,f(x^1,x^2)\right)\left( m_{ij} \right), 
       \end{equation}
       where $(m_{ij} )$ is a $3 \times 3$ orthogonal matrix, $(x^1,x^2)\in  U \subset \mathbb{R}^2$ and the surface is a graph over the plane $m_{31}x + m_{32}y + m_{33}z = 0$.\\
       We now consider the vector field $v = (v^1, v^2, v^3)$ which is linearly independent with $\varphi_{x^1}$ and $\varphi_{x^2}$. Hence we consider $v=\varphi_{x^1}\times \varphi_{x^2}$. Therefore,
       \begin{equation*}
       v^i = -f_{x^1}m_{1i} -f_{x^2}m_{2i} + m_{3i} ,
       \end{equation*}
       Now note that
       \begin{equation}
       z^i_{\eta}= \frac{\partial \varphi^i}{\partial x^{\eta}}=m_{\eta i}+f_{x^{\eta}}m_{3i}, \quad \frac{\partial^2 \varphi^i}{\partial x_{\epsilon}\partial_{x^{\eta}}}=f_{x^{\epsilon}{x^{\eta}}}m_{3i}.
       \end{equation}
Further, for all $i=1,2,3$ and $\eta,\gamma,\epsilon=1,2$, we have,
\begin{equation}
\sum\limits_{i=1}^{3}z^i_{\eta}v^i=0,\quad \sum\limits_{i=1}^{3}v^im_{3i}=1, \quad \sum\limits_{i=1}^{3}z^i_{\eta}m_{3i}=f_{x^{\eta}}, \quad \sum\limits_{i=1}^{3}z^i_{\gamma}\frac{\partial^2 \varphi^i}{\partial x^{\epsilon}\partial {x^{\eta}}}=f_{x^{\gamma}}f_{x^{\epsilon}{x^{\eta}}}. 
\end{equation}
Here the values of $A$ and $C$ are as given in \eqref{eqn4.1} and \eqref{eqn4.2} respectively. And $E=b^2(W^2-w^2)$, $w=v^3$. Let $m_{3i}=k_i$. Therefore, as obtained in Theorem \ref{thm4.1} similarly we obtain the followings:
 \begin{equation}\label{eqn4.10}
 \frac{\partial C}{\partial z^i_{\epsilon}}v^i=0,
 \end{equation}
 \begin{equation}\label{eqn4.11}
 \frac{\partial E}{\partial z^i_{\epsilon}}v^i= 2b^2\left( z^3_{\epsilon}A_{\bar{\epsilon}\bar{\epsilon}}-z^3_{\bar{\epsilon}}A_{\epsilon\bar{\epsilon}}\right) w, \quad \forall \epsilon
 \end{equation}
 \begin{equation}\label{eqn4.12}
 \frac{\partial C}{\partial z^j_{\eta}}\frac{\partial^2\varphi^j} {\partial x^{\epsilon}\partial x^{\eta}}v^i= \frac{f_{x^1}f_{x^{\epsilon}x^1}+f_{x^2}f_{x^{\epsilon}x^2}}{W}, \quad \forall \epsilon
 \end{equation}
 \begin{equation}\label{eqn4.13}
 \frac{\partial E}{\partial z^j_{\eta}}\frac{\partial^2\varphi^j} {\partial x^{\epsilon}\partial x^{\eta}}= 2b^2\left[  \left( f_{x^1}+k_1w\right) f_{x^{\epsilon}x^1}+\left( f_{x^2}+k_2w\right)f_{x^{\epsilon}x^2}\right] , \quad \forall \epsilon
 \end{equation}
 \begin{eqnarray}\label{eqn4.14}
 \frac{\partial^2 E}{\partial z^i_{\epsilon}\partial z^j_{\eta}}\frac{\partial^2\varphi^j} {\partial x^{\epsilon}\partial x^{\eta}}v^i= 2b^2\left[\left\lbrace 1+f^2_{x^2}-k_1\left( k_1W^2+f_{x^1}w\right) \right\rbrace f_{x^1x^1}\nonumber\right.\\ \left.-\left\lbrace \left(1+k^2_3 \right)f_{x^1}f_{x^2}+ k_1k_2W^2+k_1k_3f_{x^2}+k_2k_3f_{x^1}+k_1k_2\right\rbrace f_{x^1x^2}\nonumber\right.\\ \left.+ \left\lbrace 1+f^2_{x^1}-k_2\left( k_2W^2+f_{x^2}w\right) \right\rbrace f_{x^2x^2}\right], 
 \end{eqnarray}
 \begin{equation}\label{eqn4.15}
 \frac{1}{2}\frac{\partial^2 C^2}{\partial z^i_{\epsilon}\partial z^j_{\eta}}\frac{\partial^2\varphi^j} {\partial x^{\epsilon}\partial x^{\eta}}v^i= \left[ (1+f^2_{x^1})f_{x^2x^2}-2f_{x^1}f_{x^2}f_{x^1x^2}+(1+f^2_{x^2})f_{x^1x^1}\right]. 
 \end{equation}
 Using \eqref{eqn4.10} in \eqref{MCE0} we have
 \begin{eqnarray}\label{MCE003}
 \frac{\partial^2 \varphi^j}{\partial x^{\epsilon}\partial x^{\eta}}v^i\left[ \frac{\partial^2 C^2}{\partial z^i_{\epsilon}\partial z^j_{\eta}}(2C^2+3E)(2C^2+E)-\frac{\partial^2 E}{\partial z^i_{\epsilon}\partial z^j_{\eta}} 2C^2(2C^2+E)\nonumber \right.\\ \left.+\left\lbrace\frac{\partial E }{\partial z^i_\epsilon}\frac{\partial C }{\partial z^j_\eta} \left(4C^3-6CE \right)+4C^2 \frac{\partial E }{\partial z^i_\epsilon}\frac{\partial E }{\partial z^j_\eta} \right\rbrace  \right]&=&0 \hspace{1.0cm}
 \end{eqnarray}
  Let $S_b=2C^2+E$. Then, 
  \begin{equation}
  \begin{split}
   S_b=2b^2+b^2(W^2-w^2), \quad 2C^2+3E=2W^2+3b^2(W^2-w^2), \\ \left(4C^3-6CE \right)=2W\left\lbrace S_b-4b^2(W^2-w^2)\right\rbrace.
  \end{split}
  \end{equation}
  Putting all these values in \eqref{MCE003} we get \eqref{eqn4.25}.
        \end{proof}
        \begin{remark}
        Observe that when $k_1 = k_2 = 0$ and $k_3 = 1$, then equation \eqref{MCE001} reduces to \eqref{eqn4.0}. 
        \end{remark}
\begin{definition}\cite{LS1}
A differential equation is said to be a elliptic equation of mean curvature type on a domain $\Omega\subset \mathbb{R}^2$ if 
\begin{equation}
\sum\limits_{ \epsilon,\eta=1,2}a_{\epsilon\eta}(x,f,\nabla f)f_{x^{\epsilon}x^{\eta}}=0
\end{equation}
where $a_{\epsilon\eta}, \epsilon,\eta = 1, 2$ are given real-valued functions on $\Omega \times \mathbb{R} \times \mathbb{R}^2$, $x\in \Omega$, $f:\Omega \to \mathbb{R}$  with
\begin{equation}
|\xi|^2- \frac{(p\cdot \xi)^2}{1+|p|^2}\sum\limits_{ \epsilon,\eta=1,2}a_{\epsilon\eta}(x,u,p)\xi_{\epsilon}\xi_{\eta}\le\left(1+\mathcal{C} \right) \left[|\xi|^2- \frac{(p\cdot\xi)^2}{1+|p|^2}\right] 
\end{equation}
for all $u \in \mathbb{R}$, $p \in \mathbb{R}^2$ and $\xi \in \mathbb{R}^2\setminus \left\lbrace 0 \right\rbrace $.
 \end{definition}
\begin{theorem}\label{thm3}
Let $\varphi : U \subset \mathbb{R}^2 \to (V^3, F_b)$ be an immersion which is the graph of a function $f (x^1, x^2)$ over a plane. Then $\varphi$ is minimal, if and only if, $f$ satisfies the elliptic differential equation, of mean curvature type, given by
\begin{equation}\label{eqn4.34}
\sum\limits_{ \epsilon,\eta=1,2}a_{\epsilon\eta}(x,f,\nabla f)f_{x^{\epsilon}x^{\eta}}=0
\end{equation}
where,
\begin{equation}
a_{\epsilon\eta}=\delta_{\epsilon\eta}-\frac{f_{x^{\epsilon}}f_{x^{\eta}}}{W^2}+R_bW^2+\left( k_{\epsilon}+\frac{f_{x^{\epsilon}}}{W^2}\right)\left( k_{\eta}+\frac{f_{x^{\eta}}}{W^2}\right),
\end{equation}
\begin{equation}
R_b=\frac{4b^2(S_b +4 b^2w^2)}{S_b(S_b-2b^2w^2)},
\end{equation}
\end{theorem}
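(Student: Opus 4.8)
The plan is to derive Theorem~\ref{thm3} from the minimality criterion \eqref{eqn4.25} of Theorem~\ref{thm2} by normalising it to quasilinear form and then checking the ellipticity condition of the preceding Definition head-on. First I would observe that the scalar factor $S_b(S_b-2b^2w^2)$ multiplying the ``first fundamental form'' piece in \eqref{eqn4.25} is strictly positive on the whole range of admissible data: $S_b=b^2+(2+b^2)W^2>0$, and writing $w=(k_1,k_2,k_3)\cdot(-f_{x^1},-f_{x^2},1)$ and using $\sum_{i=1}^3 k_i^2=1$, the Cauchy--Schwarz inequality gives $w^2\le W^2$, whence $S_b-2b^2w^2\ge S_b-2b^2W^2=2W^2-b^2(W^2-1)>0$ since $W^2\ge1$ and $b<1/2$. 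Dividing \eqref{eqn4.25} through by this positive quantity then produces exactly \eqref{eqn4.34}, with the coefficients $a_{\epsilon\eta}$ as displayed and the nonnegative scalar $R_b=\frac{4b^2(S_b+4b^2w^2)}{S_b(S_b-2b^2w^2)}$; this already yields the equivalence between minimality and \eqref{eqn4.34}.

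It then remains to verify that \eqref{eqn4.34} is elliptic of mean-curvature type. Put $p=\nabla f$, so that $1+|p|^2=W^2$, and set $\ell_\epsilon=k_\epsilon+f_{x^\epsilon}/W^2$. The quadratic form attached to $(a_{\epsilon\eta})$ then splits as
\[
\sum_{\epsilon,\eta}a_{\epsilon\eta}\xi_\epsilon\xi_\eta \;=\; \Big(|\xi|^2-\frac{(p\cdot\xi)^2}{1+|p|^2}\Big)\;+\;R_b\Big(\sum_{\epsilon} \ell_\epsilon\xi_\epsilon\Big)^2 \;=:\; Q_0(\xi)+R_b(\ell\cdot\xi)^2 .
\]
Here $Q_0$ is precisely the ``minimal surface'' quadratic form from the Definition; since $|p|^2=W^2-1$ its eigenvalues are $1$ (on $p^\perp$) and $1/W^2$ (along $p$), so $Q_0$ is positive definite with $|\xi|^2/W^2\le Q_0(\xi)\le|\xi|^2$. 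As $R_b\ge0$, the rank-one perturbation $R_b(\ell\cdot\xi)^2$ is nonnegative, so the left-hand inequality $Q_0(\xi)\le\sum_{\epsilon,\eta}a_{\epsilon\eta}\xi_\epsilon\xi_\eta$ of the Definition holds trivially. Note also that the $a_{\epsilon\eta}$ depend only on $\nabla f$ (and on the fixed data $b$, $k_i$), so the ellipticity check is an inequality in $p$ and $\xi$ alone.

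The substantive step is to produce a constant $\mathcal{C}$, independent of $x$, $f$ and $\nabla f$, with $R_b(\ell\cdot\xi)^2\le\mathcal{C}\,Q_0(\xi)$ for all $\xi\neq0$. I would bound $(\ell\cdot\xi)^2\le|\ell|^2|\xi|^2$ and then bound $|\ell|^2=|(k_1,k_2)|^2+2(k_1f_{x^1}+k_2f_{x^2})/W^2+|p|^2/W^4$ by an absolute constant, using $|(k_1,k_2)|\le1$, $|k_1f_{x^1}+k_2f_{x^2}|\le|p|\le W$, and $W\ge1$. Combined with $Q_0(\xi)\ge|\xi|^2/W^2$ this gives $R_b(\ell\cdot\xi)^2\le(R_bW^2)\,|\ell|^2\,Q_0(\xi)$, so the whole question reduces to bounding $R_bW^2$; and since $S_b\ge(2+b^2)W^2$, $S_b-2b^2w^2\ge2W^2-b^2(W^2-1)$, and $S_b+4b^2w^2\le S_b+4b^2W^2$, one gets an explicit bound $R_bW^2\le c(b)$ with $c(b)$ an absolute constant because $b^2<1/4$. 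Taking $\mathcal{C}$ equal to $c(b)$ times the bound on $|\ell|^2$ finishes the proof. The main obstacle is exactly this uniformity as $W\to\infty$: one must control the perturbation against the degenerating eigenvalue $1/W^2$ of $Q_0$, and this works only because $S_b$ grows quadratically in $W$ while the perturbation coefficient $R_b$ decays like $W^{-2}$.
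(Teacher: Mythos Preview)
Your proposal is correct and follows the same overall architecture as the paper: first show $S_b(S_b-2b^2w^2)>0$ and divide \eqref{eqn4.25} to obtain \eqref{eqn4.34}, then split the quadratic form as $Q_0(\xi)+R_b(\ell\cdot\xi)^2$ with $Q_0(\xi)=|\xi|^2-(p\cdot\xi)^2/(1+|p|^2)$, deduce the lower bound trivially from $R_b\ge0$, and finally establish the upper bound with a uniform constant $\mathcal{C}$.

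The genuine difference lies in how the upper bound is obtained. The paper parametrises by the angles $\theta$ (between $p$ and $\xi$) and $\gamma$ (between $(k_1,k_2)$ and $\xi$), rewrites the ratio $R_b(\ell\cdot\xi)^2/Q_0(\xi)$ explicitly in terms of $|t|$, $\theta$, $\gamma$, and then argues case by case ($W^2=1$; $W^2>1$ with $\sin\theta=0$; $W^2>1$ with $\sin\theta\ne0$) that this ratio is a rational function of $|t|$ whose numerator degree does not exceed that of the denominator, hence is bounded as $|t|\to\infty$. Your argument instead bypasses the trigonometric parametrisation and the case split entirely: the Cauchy--Schwarz bound $(\ell\cdot\xi)^2\le|\ell|^2|\xi|^2$ together with the eigenvalue estimate $Q_0(\xi)\ge|\xi|^2/W^2$ reduces everything to a bound on the scalar $R_bW^2|\ell|^2$, which you then control by elementary inequalities on $S_b$, $w^2\le W^2$, and $|\ell|^2$. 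This is shorter and yields an explicit constant depending only on $b$; the paper's route, while less economical, has the mild advantage of tracking more precisely how the bound depends on the geometry of $\xi$ relative to $p$ and $(k_1,k_2)$. Your use of Cauchy--Schwarz for $w^2\le W^2$ is also a cleaner substitute for the paper's explicit sum-of-squares identity for $W^2-w^2$.
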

 \begin{proof}
In Theorem \ref{thm2}, we already prove that $\varphi$ is minimal if and only if it satisfies \eqref{eqn4.25}. Since for a Matsumoto metric $0<b<1/2$, therefore, we have from the definition, $S_b>0$. And also
\begin{equation}\label{eqn4.35}
(S_b-2b^2w^2)=b^2+(2+b^2)W^2-2b^2w^2=b^2+(2-b^2)W^2+2b^2(W^2-w^2)
\end{equation}
Now,
\begin{equation}\label{eqn4.36}
W^2-w^2=(k_2f_{x^1}-k_1f_{x^2})^2+(k_1+k_3f_{x^1})^2+(k_2+k_3f_{x^2})^2>0.
\end{equation}
Since, $0<b<1/2$, using \eqref{eqn4.36} in \eqref{eqn4.35}, we have, $(S_b-2b^2w^2)>0$.\\
Now dividing both sides of \eqref{eqn4.25} by $S_b(S_b-2b^2w^2)$, we get \eqref{eqn4.34}.\\
Let us consider $\xi\in \mathbb{R}^2\setminus \left\lbrace 0 \right\rbrace $, $x,t\in \mathbb{R}^2$ and $u\in \mathbb{R}$ and we define
\begin{equation}
h_{\epsilon\eta}(u)= \delta_{\epsilon\eta}-\frac{t_{\epsilon}t_{\eta}}{W^2(u)}.
\end{equation}
Hence, we have,
\begin{equation}\label{eqn5.36}
\sum\limits_{\epsilon,\eta=1}^{2}h_{\epsilon\eta}(t)\xi_i\xi_j=\frac{|\xi|^2}{W^2}(1+|t|^2\sin^2 \theta),
\end{equation}
where, $\theta$ is the angle function between $t$ and $\xi$. We also have from 
\begin{equation}
\sum\limits_{\epsilon,\eta=1}^{2}a_{\epsilon\eta}(x,u,t)\xi_{\epsilon}\xi_{\eta}=\sum\limits_{\epsilon\eta=1}^{2}h_{\epsilon\eta}(t)\xi_{\epsilon}\xi_{\eta}+R_bW^2\left[(k_1,k_2)\cdot\xi+\frac{w}{W^2}t\cdot\xi \right]^2,
\end{equation}
where $\cdot$ represents the Euclidean inner product.\\
Since $R_b>0$, for all $\xi\in \mathbb{R}^2\setminus \left\lbrace 0 \right\rbrace $, from \eqref{eqn5.36}  we have,
\begin{equation}\label{eqn4.37}
\sum\limits_{\epsilon,\eta=1}^{2}a_{\epsilon\eta}(x,u,t)\xi_{\epsilon}\xi_{\eta}\ge\sum\limits_{\epsilon,\eta=1}^{2}h_{\epsilon\eta}(t)\xi_{\epsilon}\xi_{\eta}\ge\frac{|\xi|^2}{W^2}>0.
\end{equation}
Hence, \eqref{eqn4.34} is an elliptic equation.
Now we prove that it is a differential equation of mean curvature type for which we need to show that there exists a constant $\mathcal{C}$ such that, for all
\begin{equation}
\sum\limits_{\epsilon, \eta=1}^{2}h_{\epsilon \eta}(x,u,t)\xi_{\epsilon}\xi_{\eta} \le \sum\limits_{\epsilon, \eta=1}^{2}a_{\epsilon  \eta}(x,u,t)\xi_{\epsilon}\xi_{\eta}\le(1+\mathcal{C})\sum\limits_{\epsilon, \eta=1}^{2}h_{\epsilon \eta}(x,u,t)\xi_{\epsilon}\xi_{\eta}.
\end{equation}
The first inequality is immediate from \eqref{eqn4.37}. To prove the second inequality we need to show that 
\begin{equation}
R_bW^2\left[(k_1,k_2).\xi+\frac{w}{W^2}t.\xi \right]^2\le\mathcal{C}\sum\limits_{\epsilon,\eta=1}^{2}h_{\epsilon\eta}(x,u,t)\xi_{\epsilon}\xi_{\eta},
\end{equation}
where, $w=-k_1t_1-k_2t_2+k_3$.\\
From  \eqref{eqn5.36} we have, 
\begin{equation}
W^2\left[(k_1,k_2).\xi+\frac{w}{W^2}t.\xi \right]^2=\frac{\left[ W^2|(k_1,k_2)|\cos \gamma+w|t|\cos \theta\right] ^2}{1+|t|^2\sin^2\theta}\sum\limits_{\epsilon, \eta=1}^{2}h_{\epsilon \eta}(x,u,t)\xi_{\epsilon}\xi_{\eta},
\end{equation}
where $\gamma$ is the angle between $(k_1,k_2)$ and $\xi$. Hence, we need to show that
\begin{equation}\label{eqn4.39}
R_b\frac{\left[ W^2|(k_1,k_2)|\cos \gamma+w|t|\cos \theta\right] ^2}{1+|t|^2\sin^2\theta}\le\mathcal{C}.
\end{equation}
It can be seen that $W^2\ge 1$. When $W^2 = 1$, then, $t = 0$. In that case, we have
\begin{equation*}
0\le R_b\left[ |(k_1,k_2)|\cos \gamma\right] ^2\le R_b(0)(k_1^2+k_2^2).
\end{equation*}
Therefore, taking $\mathcal{C}=R_b(0)(k_1^2+k_2^2)$ we prove the inequality.\\
Now suppose  $W^2 > 1$ and $\sin \theta = 0$. in that case $t \ne 0$ and the vectors $t$ and $\xi$ are parallel to each other. Hence,
\begin{equation}\label{eqn4.40}
\left[ W^2 |(k_1, k_2)| \cos \gamma + w|t| \cos \theta\right]^2  = \left[ |(k_1, k_2)| \cos \gamma + k_3|t| cos \theta\right]^2.
\end{equation}
Equation \eqref{eqn4.40} implies that $R_b\frac{\left[ W^2|(k_1,k_2)|\cos \gamma+w|t|\cos \theta\right] ^2}{1+|t|^2\sin^2\theta}$ is a rational function of $|t|$ whose numerator is of degree less than or equal to $4$, and denominator is of degree $4$ and hence it is a bounded function as $|t|$ (or, equivalently $W$) tends to infinity.\\
Now suppose $W^2 > 1$ and $\sin \theta \ne 0$, then $t \ne 0$ and the vectors $t$ and $\xi$ are not parallel.
Therefore, $R_b\frac{\left[ W^2|(k_1,k_2)|\cos \gamma+w|t|\cos \theta\right] ^2}{1+|t|^2\sin^2\theta}$ is a rational function of $|t|$ whose numerator is of degree less than or equal to $6$, and denominator is of degree $6$. Therefore, it is a bounded function when $|t|$ (or equivalently W) tends to infinity. Hence, we prove  the inequality \eqref{eqn4.39}. And this proves the theorem.
\end{proof}
\par Now the theorem proved by L. Simon (Theorem 4.1 of \cite{LS2}) and from Theorem \ref{thm3} we conclude that
     \begin{theorem}
A minimal surface in a Matsumoto space $(V^3,F_b)$, which is a graph of a function defined on $\mathbb{R}^2$, is a plane.
     \end{theorem}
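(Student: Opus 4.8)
The plan is to reduce the statement to a Bernstein-type theorem for quasilinear elliptic equations on the plane. By Theorem~\ref{thm3}, an immersion $\varphi(x^1,x^2)=(x^1,x^2,f(x^1,x^2))$ that is a graph over a plane of $V^3$ is minimal if and only if $f$ satisfies the quasilinear equation \eqref{eqn4.34}, and the coefficient matrix $(a_{\epsilon\eta}(x,f,\nabla f))$ appearing there was shown in the proof of that theorem to be elliptic and of mean curvature type in the sense of the definition recalled above. Since here the graph is defined over all of $\mathbb{R}^2$, the function $f$ is an entire $C^2$ (indeed smooth) solution of this equation.

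First I would check that all the structural hypotheses needed for Simon's theorem are indeed in force. The coefficients $a_{\epsilon\eta}$ are smooth functions of $\nabla f$: they are rational in $f_{x^1},f_{x^2}$ with denominators $W^2$ and $S_b(S_b-2b^2w^2)$, none of which vanishes, since $0<b<1/2$ forces $W^2\ge 1$, $S_b>0$, and $S_b-2b^2w^2>0$ by \eqref{eqn4.35}--\eqref{eqn4.36}. The ellipticity bound \eqref{eqn4.37} and the mean-curvature-type comparison with the explicit constant $\mathcal{C}$ constructed in the proof of Theorem~\ref{thm3} hold uniformly in $x$, $u$ and $p=\nabla f$; the key point is that the rational functions of $|t|$ occurring in \eqref{eqn4.39} stay bounded as $|t|\to\infty$ uniformly in the angular variables $\gamma,\theta$, so $\mathcal{C}$ can be chosen independent of all arguments. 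Thus \eqref{eqn4.34} is precisely of the type covered by Theorem~4.1 of~\cite{LS2}.

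Then I would invoke that theorem of L.~Simon: every entire solution on $\mathbb{R}^2$ of an elliptic differential equation of mean curvature type is affine. Applied to $f$, this gives $f(x^1,x^2)=c_1 x^1+c_2 x^2+c_3$ for constants $c_1,c_2,c_3$, so $\varphi(\mathbb{R}^2)$ is the plane $\{x^3=c_1 x^1+c_2 x^2+c_3\}$ (transported by the orthogonal matrix $(m_{ij})$ of Theorem~\ref{thm2}), which is exactly the assertion.

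The only real work --- and the main, though modest, obstacle --- lies in the middle step: one must be certain that the constant $\mathcal{C}$ is genuinely uniform and that Simon's precise formulation imposes no further condition (for instance on the dependence of $a_{\epsilon\eta}$ on $x$, or some growth restriction) beyond what Theorem~\ref{thm3} already supplies. Once this bookkeeping is carried out the conclusion is immediate from the cited Bernstein theorem, with no additional computation required.
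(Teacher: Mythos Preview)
Your proposal is correct and follows exactly the paper's own argument: combine Theorem~\ref{thm3}, which shows the minimal surface equation is elliptic of mean curvature type, with Simon's Bernstein-type Theorem~4.1 of~\cite{LS2} to conclude that any entire solution is affine. The paper states this in a single sentence, while you additionally verify the structural hypotheses of Simon's theorem, but the approach is the same.
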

 \section{The characterization of minimal surfaces of translation surfaces}
  In this section we study the  minimal translation surface $M^2$ in Matsumoto space $(V^3,F_b)$, where $V^3$ is a real vectoe space and $F_b=\frac{{\tilde{\alpha}}^2}{\tilde{\alpha}-\tilde{\beta}}$ is a Matsumoto metric, where $\tilde{\alpha}$ is the Euclidean metric and $\tilde{\beta}=bdx^3$ is a one-form. Here we consider the immersion $\varphi : U \subset \mathbb{R}^2 \to (V^3, F_b)$ given by $\varphi(x^1, x^2) =(x^1, x^2, f(x^1)+g(x^2))$. At first we show that the pullback metric of $F_b$ by $\varphi$ is again a Matsumoto metric and then find the characterization equation the surface to be minimal.
  \begin{proposition}
  Let $F_b=F_b=\frac{{\tilde{\alpha}}^2}{\tilde{\alpha}-\tilde{\beta}}$ is a Matsumoto metric, where $\tilde{\alpha}$ is the Euclidean metric and $\tilde{\beta}=bdx^3$ is a one-form on the real vector space $V^3$. Now suppose $\varphi : U \subset \mathbb{R}^2 \to (V^3, F_b)$ given by $\varphi(x^1, x^2) =(x^1, x^2, f(x^1)+g (x^2))$,where $f$ and $g$ is a real valued smooth function be an immersion then the pullback metric on $U$ defined by \eqref{eqn2.1} is again a Matsumoto metric.
  \end{proposition}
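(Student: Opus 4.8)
The plan is to mirror the computation carried out for the graph case in the previous proposition, observing that a translation surface is a special graph in which the height function $f(x^1,x^2)$ is replaced by $f(x^1)+g(x^2)$. First I would pull back the coordinate one-forms under $\varphi(x^1,x^2)=(x^1,x^2,f(x^1)+g(x^2))$: clearly $\varphi^*(d\tilde{x}^1)=dx^1$, $\varphi^*(d\tilde{x}^2)=dx^2$, and $\varphi^*(d\tilde{x}^3)=d\big(f(x^1)+g(x^2)\big)=f'(x^1)\,dx^1+g'(x^2)\,dx^2$. Substituting these into $F_b=\tilde{\alpha}^2/(\tilde{\alpha}-\tilde{\beta})$ with $\tilde{\alpha}^2=(d\tilde{x}^1)^2+(d\tilde{x}^2)^2+(d\tilde{x}^3)^2$ and $\tilde{\beta}=b\,d\tilde{x}^3$ yields
\[
\varphi^*F_b=\frac{(1+f'^2)(dx^1)^2+2f'g'\,dx^1dx^2+(1+g'^2)(dx^2)^2}{\sqrt{(1+f'^2)(dx^1)^2+2f'g'\,dx^1dx^2+(1+g'^2)(dx^2)^2}-b\big(f'\,dx^1+g'\,dx^2\big)},
\]
which already exhibits the form $\alpha^2/(\alpha-\beta)$ with $\alpha^2=(1+f'^2)(dx^1)^2+2f'g'\,dx^1dx^2+(1+g'^2)(dx^2)^2$ and $\beta=b\big(f'\,dx^1+g'\,dx^2\big)$.

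The next step is to check that $\alpha$ is a genuine Riemannian metric, i.e. that its coefficient matrix $\begin{pmatrix}1+f'^2 & f'g'\\ f'g' & 1+g'^2\end{pmatrix}$ is positive definite; this is immediate since its trace $2+f'^2+g'^2$ and its determinant $1+f'^2+g'^2$ are both strictly positive. Finally, to confirm that $\varphi^*F_b$ is a Matsumoto metric in the sense of the paper's definition, i.e. that the $\alpha$-norm $b_\alpha$ of $\beta$ satisfies $0\le b_\alpha<1/2$, I would compute it using the inverse matrix $a^{\epsilon\eta}=W^{-2}\begin{pmatrix}1+g'^2 & -f'g'\\ -f'g' & 1+f'^2\end{pmatrix}$ with $W^2=1+f'^2+g'^2$. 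A short calculation gives $b_\alpha^2=a^{\epsilon\eta}\beta_\epsilon\beta_\eta=b^2(f'^2+g'^2)/W^2=b^2(W^2-1)/W^2$, so that $b_\alpha^2<b^2<1/4$.

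Since every step is either an explicit substitution or an elementary positivity check, there is no genuine obstacle here; the only point deserving a little attention is the norm bound on $\beta$, and it works out because the factor $(W^2-1)/W^2$ is always strictly less than $1$, forcing $b_\alpha<b<1/2$. Hence the pullback metric on $U$ is again a Matsumoto metric, which completes the proof.
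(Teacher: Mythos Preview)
Your proof is correct and follows essentially the same route as the paper: pull back the coordinate one-forms, substitute into $F_b$, and read off $\alpha$ and $\beta$. You go a bit further than the paper by explicitly verifying that $\alpha$ is positive definite and that the induced norm satisfies $b_\alpha<1/2$, checks the paper's own proof omits but which are indeed needed to conclude that the pullback is a Matsumoto metric in the sense of the definition given.
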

  \begin{proof}
  We have,
  \begin{equation}
  F_b=F_b=\frac{{\tilde{\alpha}}^2}{\tilde{\alpha}-\tilde{\beta}}=\frac{(d\tilde{x}^1)^2+(d\tilde{x}^2)^2+(d\tilde{x}^3)^2}{\sqrt{(d\tilde{x}^1)^2+(d\tilde{x}^2)^2+(d\tilde{x}^3)^2}-bd\tilde{x}^3}
  \end{equation}
  Now,
  \begin{equation}
  \begin{split}
  \varphi^*(d\tilde{x}^1)=dx_1, \quad \varphi^*(d\tilde{x}^2)=dx_2, \\ \varphi^*(d\tilde{x}^3)=(d(f(x^1)+g (x^2)))=f_{x^1}dx^1+g_{x^2}dx^2
  \end{split}
  \end{equation}
  Therefore,
  \begin{equation}
  \varphi^*F_b=\frac{(1+f^2_{x^1})(dx^1)^2+2f_{x^1}g_{x^2}dx^1dx^2+(1+g^2_{x^2})(dx^2)^2}{\sqrt{(1+f^2_{x^1})(dx^1)^2+2f_{x^1}g_{x^2}dx^1dx^2+(1+g^2_{x^2})(dx^2)^2}-b(f_{x^1}dx^1+g_{x^2}dx^2)}
  \end{equation}
  which is a Matsumoto metric of the form $\frac{\alpha^2}{\alpha-\beta}$, where
  \begin{equation*}
  \alpha^2=(1+f^2_{x^1})(dx^1)^2+2f_{x^1}g_{x^2}dx^1dx^2+(1+g^2_{x^2})(dx^2)^2
  \end{equation*}
  is a Riemannian metric and
  \begin{equation*}
  \beta=b(f_{x^1}dx^1+g_{x^2}dx^2)
  \end{equation*}
  is a one-form.
  \end{proof}
  \par Let us consider the following immersion:
 \begin{equation}
 \varphi(x^1,x^2)=(\varphi^1,\varphi^2,\varphi^3)=\left( x^1,x^2,f(x^1)+g(x^2)\right) 
 \end{equation}
 Then we can write
 \begin{equation}
 \varphi^j=\delta_{j1}x_1+\delta_{j2}x_2+(f+g)\delta_{j3}, \quad  1\le \epsilon \le 3.
 \end{equation}
 Therefore, we get 
  \begin{equation}\label{eqn5.1}
  A = 
  \begin{pmatrix}
  1+f^2_{x^1} & f_{x^1}g_{x^2} \\
  f_{x^1}g_{x^2} & 1+g^2_{x^2} \\
  \end{pmatrix},
  \end{equation}
  \begin{equation}\label{eqn5.2}
  C=\sqrt{det A}=\sqrt{1+f^2_{x^1}+g^2_{x^2}} \quad \textnormal{and} \quad E=b^2(f^2_{x^1}+g^2_{x^2}).
  \end{equation}
  Here we choose $v=\varphi_{x^1}\times \varphi_{x^2} $. Then $v=(v^1,v^2,v^3)=(-f_{x^1},-g_{x^2},1)$. Hence, $v^i=-\delta_{i3}f_{x^1}-\delta_{i3}g_{x^2}+\delta_{i3}, \quad 1\le i\le 3$.\\
 By some simple calculations we can have
 \begin{equation}\label{eqn5.3}
 \frac{\partial C}{\partial z^i_{\epsilon}}v^i=0,
 \end{equation}
 \begin{equation}\label{eqn5.4}
 \frac{\partial E}{\partial z^i_{\epsilon}}v^i= 2b^2(\delta_{\epsilon 1}f_{x^1}+\delta_{\epsilon 2}g_{x^2}),
 \end{equation}
 \begin{equation}\label{eqn5.5}
 \frac{\partial C}{\partial z^j_{\eta}}\frac{\partial^2\varphi^j} {\partial x^{\epsilon}\partial x^{\eta}}= \frac{\delta_{\epsilon 1}f_{x^1}f_{x^1x^1}+\delta_{\epsilon 2}g_{x^2x^2}}{C},
 \end{equation}
 \begin{equation}\label{eqn5.6}
 \frac{\partial E}{\partial z^j_{\eta}}\frac{\partial^2\varphi^j} {\partial x^{\epsilon}\partial x^{\eta}}= 2b^2(\delta_{\epsilon 1}f_{x^1}f_{x^1x^1}+\delta_{\epsilon 2}g_{x^2x^2}),
 \end{equation}
 \begin{equation}\label{eqn5.7}
 \frac{\partial^2 E}{\partial z^i_{\epsilon}\partial z^j_{\eta}}\frac{\partial^2\varphi^j} {\partial x^{\epsilon}\partial x^{\eta}}v^i= 2b^2\left[ (1+g^2_{x^2x^2})f_{x^1x^1}+(1+f^2_{x^1x^1})g_{x^2x^2}\right] ,
 \end{equation}
 \begin{equation}\label{eqn5.8}
 \frac{\partial^2 C^2}{\partial z^i_{\epsilon}\partial z^j_{\eta}}\frac{\partial^2\varphi^j} {\partial x_{\epsilon}\partial x_{\eta}}v^i= 2\left[ (1+g^2_{x^2x^2})f_{x^1x^1}+(1+f^2_{x^1x^1})g_{x^2x^2}\right],
 \end{equation}
  Using \eqref{eqn5.3} in \eqref{MCE0} we have
 \begin{eqnarray}\label{MCE002}
 \frac{\partial^2 \varphi^j}{\partial x^{\epsilon}\partial x^{\eta}}v^i\left[ \frac{\partial^2 C^2}{\partial z^i_{\epsilon}\partial z^j_{\eta}}(2C^2+3E)(2C^2+E)-\frac{\partial^2 E}{\partial z^i_{\epsilon}\partial z^j_{\eta}} 2C^2(2C^2+E)\nonumber \right.\\ \left.+\left\lbrace\frac{\partial E }{\partial z^i_\epsilon}\frac{\partial C }{\partial z^j_\eta} \left(4C^3-6CE \right)+4C^2 \frac{\partial E }{\partial z^i_\epsilon}\frac{\partial E }{\partial z^j_\eta} \right\rbrace  \right]&=&0. \hspace{1.0cm}
 \end{eqnarray}
 Therefore, using \eqref{eqn5.2} to \eqref{eqn5.8} in \eqref{MCE002} we obtain 
 \begin{equation}\label{eqn5.11}
 \begin{split}
f_{x^1x^1}(1+g^2_{x^2})\left[2+(2+b^2)(f^2_{x^1}+g^2_{x^2})\right]\left[ 2(1-b^2)(2+b^2)(f^2_{x^1}+g^2_{x^2})\right] \\ +g_{x^2x^2}(1+f^2_{x^1})\left[2+(2+b^2)(f^2_{x^1}+g^2_{x^2})\right]\left[ 2(1-b^2)(2+b^2)(f^2_{x^1}+g^2_{x^2})\right]=0 .
 \end{split}
 \end{equation}
Hence, we have the following theorem,
\begin{theorem}
Let $\varphi:M^2 \to (V^3,F_b)$ be an immersion in a Matsumoto space with local coordinates $(\varphi^i(x))$. Then $\varphi$ is minimal if and only if 
\begin{equation}\label{eqn5.12}
\lambda f_{x^1x^1}+ \mu g_{x^2x^2}=0
\end{equation}
where,
\begin{equation}\label{eqn5.13}
\begin{split}
\lambda=(1+g^2_{x^2})\left[2+(2+b^2)(f^2_{x^1}+g^2_{x^2})\right]\left[ 2(1-b^2)(2+b^2)(f^2_{x^1}+g^2_{x^2})\right]\\
+6b^2f^2_{x^1}\left\lbrace 2+(2-b^2) (f^2_{x^1}+g^2_{x^2})\right\rbrace 
\end{split}
\end{equation}
and
\begin{equation}\label{eqn5.14}
\begin{split}
\mu= (1+f^2_{x^1})\left[2+(2+b^2)(f^2_{x^1}+g^2_{x^2})\right]\left[ 2(1-b^2)(2+b^2)(f^2_{x^1}+g^2_{x^2})\right]\\
+6b^2g^2_{x^2}\left\lbrace 2+(2-b^2) (f^2_{x^1}+g^2_{x^2})\right\rbrace
\end{split}
\end{equation}
\end{theorem}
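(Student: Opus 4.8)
The plan is to specialise the characteristic minimality equation \eqref{MCE0}, which is equivalent to minimality, to the translation immersion $\varphi(x^1,x^2)=(x^1,x^2,f(x^1)+g(x^2))$, and then simply to read off the coefficients of the two second derivatives that survive. The starting observation is that $\mathcal{H}_\varphi$ is linear in its test vector and vanishes on $\varphi_*(T_xM)$, so minimality of $\varphi$ is equivalent to the single scalar identity $\frac{\partial^2\mathcal{F}}{\partial z^i_\epsilon\partial z^j_\eta}\frac{\partial^2\varphi^j}{\partial x^\epsilon\partial x^\eta}v^i=0$ evaluated on one transverse field; I would take $v=\varphi_{x^1}\times\varphi_{x^2}=(-f_{x^1},-g_{x^2},1)$, which is transverse at every point where $\varphi$ is an immersion.

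First I would record the data already set up in \eqref{eqn5.1} and \eqref{eqn5.2}: the induced matrix $A$, $C=\sqrt{\det A}=\sqrt{1+f_{x^1}^2+g_{x^2}^2}$ and $E=b^2(f_{x^1}^2+g_{x^2}^2)$, together with the fact that, since $\varphi$ is a translation surface, $\partial^2\varphi^i/\partial x^\epsilon\partial x^\eta=\delta_{i3}(\delta_{\epsilon1}\delta_{\eta1}f_{x^1x^1}+\delta_{\epsilon2}\delta_{\eta2}g_{x^2x^2})$ has no mixed component. Feeding these into the contraction formulas \eqref{eqn5.3} to \eqref{eqn5.8}, in particular the vanishing $\frac{\partial C}{\partial z^i_\epsilon}v^i=0$, which kills the $\frac{\partial C}{\partial z^i_\epsilon}\frac{\partial C}{\partial z^j_\eta}$ term of \eqref{MCE0} together with one of the two orderings inside its $\frac{\partial C}{\partial z^i_\epsilon}\frac{\partial E}{\partial z^j_\eta}$ term, collapses \eqref{MCE0} to the reduced form \eqref{MCE002}, in which only four blocks remain: a $\frac{\partial^2C^2}{\partial z^i_\epsilon\partial z^j_\eta}$ block, a $\frac{\partial^2E}{\partial z^i_\epsilon\partial z^j_\eta}$ block, a $\frac{\partial E}{\partial z^i_\epsilon}\frac{\partial C}{\partial z^j_\eta}$ block weighted by $4C^3-6CE$, and a $\frac{\partial E}{\partial z^i_\epsilon}\frac{\partial E}{\partial z^j_\eta}$ block weighted by $4C^2$, each contracted against $\frac{\partial^2\varphi^j}{\partial x^\epsilon\partial x^\eta}v^i$.

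Next I would substitute and collect. Clearing the common factor $(2C^2+E)^3$ from \eqref{MCE0} and writing $W^2=1+f_{x^1}^2+g_{x^2}^2$, so that $2C^2+E=(2+b^2)W^2-b^2$ and $4C^3-6CE=2C\,(2W^2-3b^2(W^2-1))$, the two second-order blocks, evaluated via \eqref{eqn5.8} and \eqref{eqn5.7}, both carry the common factor $(1+g_{x^2}^2)f_{x^1x^1}+(1+f_{x^1}^2)g_{x^2x^2}$ and combine into the pieces of $\lambda$ and $\mu$ that contain the factor $2+(2+b^2)(f_{x^1}^2+g_{x^2}^2)$; the remaining two blocks, evaluated by pairing \eqref{eqn5.4} with \eqref{eqn5.5} and with \eqref{eqn5.6} respectively, carry instead the factor $f_{x^1}^2 f_{x^1x^1}+g_{x^2}^2 g_{x^2x^2}$ and supply the $b^2$-weighted remainder terms of \eqref{eqn5.13} and \eqref{eqn5.14}. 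Reading off the coefficient of $f_{x^1x^1}$ then gives $\lambda$ and the coefficient of $g_{x^2x^2}$ gives $\mu$; since \eqref{MCE0} is equivalent to minimality while every intermediate step is an algebraic identity, the asserted equivalence \eqref{eqn5.12} follows.

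The only genuine work, and the natural place to slip, is the bookkeeping in this last step: one must expand the three products $(2C^2+3E)(2C^2+E)$, $2C^2(2C^2+E)$ and $4C^3-6CE$ consistently in $b^2$ and $W^2$, keep track of the factor $(2C^2+E)^3$ that was cleared, and make sure the $\frac{\partial E}{\partial z^i_\epsilon}\frac{\partial C}{\partial z^j_\eta}$ contraction is counted exactly once, its partner having already vanished. A useful consistency check at the end is to put $g\equiv0$: then \eqref{eqn5.12} should reduce to the one-variable case ($f_{x^2}=f_{x^1x^2}=f_{x^2x^2}=0$) of the graph equation \eqref{eqn4.0} of Theorem \ref{thm4.1}, with $\lambda$ collapsing to the corresponding coefficient of $f_{x^1x^1}$ there.
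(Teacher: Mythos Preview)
Your proposal is correct and follows essentially the same approach as the paper: you specialise the general minimality equation \eqref{MCE0} to the translation immersion with the transverse field $v=\varphi_{x^1}\times\varphi_{x^2}$, exploit the vanishing \eqref{eqn5.3} to reduce to \eqref{MCE002}, and then substitute the contractions \eqref{eqn5.4}--\eqref{eqn5.8} to read off $\lambda$ and $\mu$, exactly as the paper does in the computations preceding the theorem. The consistency check against Theorem~\ref{thm4.1} at $g\equiv0$ is a nice addition not present in the paper.
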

Now we want to solve the differential equation \eqref{eqn5.11}. Let $r=f^2_{x^1}$ and $s=g^2_{x^2}$. Then
\begin{equation}
f_{x^1x^1}=\frac{r_f}{2}, \qquad g_{x^2x^2}=\frac{s_g}{2}.
\end{equation}
Then from \eqref{eqn5.13} and \eqref{eqn5.14} becomes

\begin{equation}\label{eqn5.15}
\begin{split}
\lambda=(1+s)\left[2+(2+b^2)(r+s)\right]\left[ 2(1-b^2)(2+b^2)(r+s)\right]\\
+6b^2r\left\lbrace 2+(2-b^2) (r+s)\right\rbrace 
\end{split}
\end{equation}
and
\begin{equation}\label{eqn5.16}
\begin{split}
\mu=(1+r)\left[2+(2+b^2)(r+s)\right]\left[ 2(1-b^2)(2+b^2)(r+s)\right]\\
+6b^2s\left\lbrace 2+(2-b^2) (r+s)\right\rbrace
\end{split}
\end{equation}
And \eqref{eqn5.12} becomes
\begin{equation}
r_f\lambda+s_q\mu=0
\end{equation}
Therefore, we have two cases:\\
\textbf{Case 1:} If $r_f=0$ or, $s_g=0$, then $r$ and $s$ are constant functions. And hence $f$ and $g$ are linear functions. Therefore, $M^2$ is a piece of plane in $(V^3, F_b)$.\\
\textbf{Case 2:} Let $r_f\ne 0$ and $s_g \ne 0$. Then we have, $\lambda\ne 0$ and $\mu \ne 0$. Suppose 
\begin{equation}
\kappa =\frac{r_f}{\mu}=-\frac{s_g}{\lambda}.
\end{equation}
Which implies that
\begin{equation*}
(r_f)_g=\mu_g\kappa+\mu\kappa_g=0 \quad \textnormal{and} \quad (s_g)_f=\lambda_f\kappa+\lambda\kappa_f=0
\end{equation*}
Hence, we have, 
\begin{equation}
\log{\kappa}_f= \frac{\kappa_f}{\kappa}=-\frac{\lambda_f}{\lambda}\quad \textnormal{and} \quad \log{\kappa}_g=\frac{\kappa_g}{\kappa}=-\frac{\mu_g}{\mu}.
\end{equation}
Since, $(\log{\kappa}_f)_g=(\log{\kappa}_g)_f$, we have,
\begin{equation}\label{eqn5.17}
\left(\frac{\lambda_f}{\lambda} \right)_g =\left( \frac{\mu_g}{\mu}\right)_f.
\end{equation}
We can easily observe that, $r_g=(r_f)_g=0$ and $s_f=(s_g)_f=0$. Therefore, we have, 
\begin{equation}\label{eqn5.18}
\left(\frac{\lambda_f}{\lambda} \right)_g =\left( \frac{\lambda_rr_f}{\lambda}\right) _g=\left( \frac{\lambda_r}{\lambda}\right) _gr_f=\left( \frac{\lambda_r}{\lambda}\right) _sr_fs_g
\end{equation} 
and
\begin{equation}\label{eqn5.19}
\left(\frac{\mu_g}{\mu} \right)_f =\left( \frac{\mu_ss_g}{\mu}\right) _f=\left( \frac{\mu_s}{\mu}\right) _fs_g=\left( \frac{\mu_s}{\mu}\right) _rr_fs_g
\end{equation} 
Using \eqref{eqn5.18} and \eqref{eqn5.19} in \eqref{eqn5.17} we get,
\begin{equation}
\left( \frac{\lambda_r}{\lambda}\right) _s =\left( \frac{\mu_s}{\mu}\right) _r.
\end{equation}
That is,
\begin{equation}\label{eqn5.42}
\left( \log \frac{\lambda}{\mu}\right)_{rs}=0 
\end{equation}
Let $p=r+s$ and $q=r-s$. Then we have
\begin{equation}
\lambda=K(p)-L(p)q, \qquad \mu=K(p)+L(p)q
\end{equation}
where,
\begin{equation}\label{eqn5.201}
K(p)=4(1-b^2)+\frac{p}{2}(20++8b^2-4b^4)+\frac{p^2}{2}(16+20b^2-6b^4)+\frac{p^3}{2}(2+b^2)^2
\end{equation}
\begin{equation}\label{eqn5.202}
L(p)=2(1-4b^2)+\frac{p}{2}(8-12b^2+4b^4)+\frac{p^2}{2}(2+b^2)^2
\end{equation}
Now from \eqref{eqn5.42} it follows that
\begin{equation}\label{eqn5.20}
\left( \log \frac{\lambda}{\mu}\right)_{rs}=\left( \log \frac{\lambda}{\mu}\right)_{pp}-\left( \log \frac{\lambda}{\mu}\right)_{qq}=0
\end{equation}
Now substitute the values of $\lambda$ and $\mu$ in \eqref{eqn5.20} we get
\begin{equation}
\begin{split}
q^3\left(K_{pp}L^3-KL^2L_{pp}-2K_pL_pL^2+2KLL^2_p \right) \\ +q\left( -K_{pp}K^2L+K^3L_{pp}-2K_pK^2L_p+2K^2_pKL-2KL^3\right) =0.
\end{split}
\end{equation}
Since, $t$ is an arbitrary function we get,
\begin{equation}\label{eqn5.21}
K_{pp}L^3-KL^2L_{pp}-2K_sL_pL^2+2KLL^2_p=0
\end{equation}
\begin{equation}\label{eqn5.22}
 -K_{pp}K^2L+K^3L_{pp}-2K_pK^2L_p+2K^2_pKL-2KL^3=0
\end{equation}
From \eqref{eqn5.201} and \eqref{eqn5.202} we can obtain easily that
\begin{equation}\label{eqn5.23}
\left[ \left(\frac{K}{L} \right)_p \right]^2=1. 
\end{equation}
Therefore,
\begin{equation}\label{eqn5.24}
\frac{K}{L}=p+\frac{8+32b^2-10b^4}{(2+b^2)^2}+\frac{4b^4}{T}\left(\frac{132-60b^2+9b^4}{(2+b^2)^2}p+\frac{2(66-21b^2)}{(2+b^2)^2} \right) 
\end{equation}
where, 
\begin{equation*}
T=(4-16b^2)+p(8-12b^2+4b^4)+p^2(2+b^2)^2
\end{equation*}
Now differentiating \eqref{eqn5.24} with respect to $p$ we get 
\begin{equation}\label{eqn5.25}
\left( \frac{K}{L}\right)_p=1- \frac{4b^4}{T'}\left(\frac{9b^4-102b^2+264}{(b^2+2)^2}+(94b^4-12b^2+8+2(b^2+2)^2p)\right) 
\end{equation}
where, $T'=(-16b^2+4+(94b^4-12b^2+8)p+(b^2+2)^2p^2)^2$\\
Now \eqref{eqn5.23} will true if and only if $b=0$. Hence, we obtain the following theorem:
\begin{theorem}
A minimal surface in a Matsumoto space $(V^3,F_b)$, which is the translation surface defined on $\mathbb{R}^2$, is a plane.
\end{theorem}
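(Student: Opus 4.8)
The plan is to start from the minimal-surface equation for a translation graph established just above, namely \eqref{eqn5.12}, $\lambda f_{x^1x^1}+\mu g_{x^2x^2}=0$, where $\lambda,\mu$ are the polynomials \eqref{eqn5.13}--\eqref{eqn5.14} in $f_{x^1}^2$ and $g_{x^2}^2$. Introducing $r=f_{x^1}^2$ and $s=g_{x^2}^2$ turns the second derivatives into $f_{x^1x^1}=r_f/2$ and $g_{x^2x^2}=s_g/2$, so the equation becomes $r_f\lambda+s_g\mu=0$ with $\lambda=\lambda(r,s)$, $\mu=\mu(r,s)$ given by \eqref{eqn5.15}--\eqref{eqn5.16}. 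The first, easy case is $r_f\equiv 0$ or $s_g\equiv 0$ on the domain: then $r$ (resp.\ $s$) is constant, hence $f_{x^1}$ (resp.\ $g_{x^2}$) is constant and $f$ (resp.\ $g$) is affine; plugging back into \eqref{eqn5.12} forces the other function to be affine too, so $\varphi$ parametrizes a piece of a plane and we are done in this case.

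The real work is in the complementary case $r_f\not\equiv 0$, $s_g\not\equiv 0$. Since $0<b<1/2$ makes $\lambda,\mu>0$, one sets $\kappa:=r_f/\mu=-s_g/\lambda$. Differentiating the first expression in $g$ and the second in $f$ (using $r_g=s_f=0$) gives $(\log\kappa)_f=-\lambda_f/\lambda$ and $(\log\kappa)_g=-\mu_g/\mu$; equating the mixed second partials $(\log\kappa)_{fg}=(\log\kappa)_{gf}$ and dividing out the nonzero factor $r_f s_g$ collapses everything to the separation identity $\bigl(\log(\lambda/\mu)\bigr)_{rs}=0$, i.e.\ \eqref{eqn5.42}.

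The core of the argument is to show this identity is incompatible with $b\ne 0$. Passing to $p=r+s$, $q=r-s$, one checks from the explicit polynomials that $\lambda$ and $\mu$ are the $q$-odd/$q$-even recombinations $\lambda=K(p)-L(p)q$, $\mu=K(p)+L(p)q$ with $K,L$ as in \eqref{eqn5.201}--\eqref{eqn5.202}. Writing $(\cdot)_{rs}=(\cdot)_{pp}-(\cdot)_{qq}$ and substituting, the condition $\bigl(\log(\lambda/\mu)\bigr)_{pp}-\bigl(\log(\lambda/\mu)\bigr)_{qq}=0$ becomes a polynomial in $q$; since $q$ varies over an interval, the coefficients of $q^3$ and $q^1$ must each vanish identically in $p$, giving the two ODEs \eqref{eqn5.21}--\eqref{eqn5.22} for $K$ and $L$. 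Combining these and inserting the explicit coefficients reduces, after simplification, to the single scalar condition $\bigl[(K/L)_p\bigr]^2=1$, \eqref{eqn5.23}. Computing $(K/L)_p$ directly from \eqref{eqn5.24} yields the rational function \eqref{eqn5.25}, which can equal $\pm 1$ for all admissible $p$ only if $b=0$; this contradicts $0<b<1/2$, so the second case is impossible and every minimal translation surface in $(V^3,F_b)$ is a plane.

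I expect the main obstacle to be the bookkeeping in the last paragraph: confirming that $\lambda,\mu$ really have the clean $K\pm Lq$ form with precisely the stated coefficients, carrying the mixed-partial/ODE reduction through without sign errors, and — most delicately — arguing that $\bigl[(K/L)_p\bigr]^2\equiv 1$ \emph{as an identity in $p$} (not just at isolated values) forces $b=0$, which requires comparing the numerator and denominator of \eqref{eqn5.25} coefficient by coefficient as polynomials in $p$. By contrast, the separation-of-variables steps and the affine conclusion in Case~1 are routine.
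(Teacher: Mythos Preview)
Your proposal is correct and follows essentially the same route as the paper: the same case split on $r_f,s_g$, the same $\kappa$-trick leading to $(\log(\lambda/\mu))_{rs}=0$, the same $p,q$ change of variables with $\lambda=K-Lq$, $\mu=K+Lq$, the extraction of the two ODEs \eqref{eqn5.21}--\eqref{eqn5.22}, and the reduction to $[(K/L)_p]^2=1$ which is then contradicted by the explicit computation \eqref{eqn5.25} unless $b=0$. Your handling of Case~1 is in fact slightly more careful than the paper's (you argue that one derivative vanishing forces the other to vanish via $\mu>0$, rather than asserting both at once), but otherwise the arguments coincide.
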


\end{document}